\newtheorem{thm}{Theorem}[section]
\newtheorem{lem}[thm]{Lemma}
\newtheorem{cor}[thm]{Corollary}
\newtheorem{theorem}{Theorem}
\theoremstyle{remark}
\newtheorem{rem}{Remark} [section]
\newtheorem{ppt}{Property}
\theoremstyle{remark}
\theoremstyle{definition}
\newtheorem{defi}[thm]{Definition}
\newtheorem*{thm*}{Theorem}
\newtheorem*{cor*}{Corollary}
\newtheorem*{conj*}{Conjecture}
\newcommand{\Span}[1]{\operatorname{span}\set{#1}}
\DeclareFontFamily{OMX}{yhex}{}
\DeclareFontShape{OMX}{yhex}{m}{n}{<->yhcmex10}{}
\DeclareSymbolFont{yhlargesymbols}{OMX}{yhex}{m}{n}
\DeclareMathAccent{\wip}{\mathord}{yhlargesymbols}{"F3}
\title{The arc complexes of partially decorated hyperbolic polygons}
\author{Pallavi Panda}
\date{}
\begin{document}
	
	\maketitle
\paragraph{Abstract.} We consider two families of hyperbolic polygons: ideal and ideal once-punctured, some of whose spikes are decorated with horoballs. We show that the arc complexes of these two families of surfaces, generated by edge-to-edge arcs and edge-to-decorated-spike arcs, are closed piecewise linear balls. This is proved in a completely combinatorial setting: compact polygons whose vertices are assigned red or blue colouring. In order to prove the ballness, we show that these simplicial complexes are pseudo-manifolds and use shellability to conclude. As a consequence, we parametrise weakly-lengthening deformations of the partially decorated hyperbolic polygons.

\section{Introduction}
In this paper, we study the topology of a finite combinatorial object associated to two types of hyperbolic surfaces: decorated ideal polygons and decorated once-punctured polygons.

An ideal $n$-gon, denoted by $\ip n$ is the convex hull in $\HP$ of $n\geq 3$ points in the boundary of $\HP$. This surface is homeomorphic to a disk with $n$ points removed from its boundary. The missing points are called \emph{spikes} and the infinite hyperbolic geodesic joining two consecutive spikes is called an \emph{edge}. A once-punctured $n$-gon, denoted by $\puncp n$, with ($n\geq 1$), is obtained from an ideal ($n+2$)-gon by identifying two consecutive edges. The resulting surface is homeomorphic to a punctured disk with $n$ points removed from its boundary. 

We will refer to both these families as hyperbolic polygons. A hyperbolic polygon with $n$ spikes is said to be \emph{partially decorated} if $0\leq r\leq n$ of its spikes are equipped with horoballs. When $r=0$, the polygon is said to be \emph{undecorated}. When $r=n$, the polygon is said to be \emph{fully decorated}. We denote by $\Pi_{n,r}$ (resp. $\Pi^\times_{n,r}$) the family of ideal $n$-gons (resp. once-punctured $n$-gons) with $r$ decorated spikes. This construction is motivated by Penner's decorated Teichmuller theory, where he studied general fully decorated hyperbolic surfaces. 

The \emph{arc complex} $\hac \Pi$ of such a partially decorated surface $\Pi$ is a pure clique (simplicial) complex using certain embedded arcs and their intersectionality. This was first introduced by Harvey \cite{Harvey} for general hyperbolic surfaces with boundary. In our case, the 0-simplices are given by the isotopy classes of two types of arcs: edge-to-edge arcs, and decorated-spike-to-edge arcs. Two such 0-simplices are joined by a 1-simplex if there exists representatives in their isotopy classes that are disjoint. The arc family corresponding to a maximal simplex is called a \emph{triangulation} of the polygon. One can construct a flip graph whose vertices are given by triangulations; two vertices are joined by an edge if it is possible to obtain one triangulation from the other by exchanging only one arc. 

Penner \cite{penner} proved that the arc complexes of undecorated ideal polygons as well as undecorated once-punctured polygons are piecewise linear spheres. In general, arc complexes are locally non-compact with infinite diameter. The two families of hyperbolic polygons are amongst a short list of exceptions. The flip graph of undecorated $\ip n$ is the 1-skeleton of the famous polytope called \emph{associahedron}, discovered by Tamari \cite{tamari}, and then rediscovered by Stasheff \cite{stash}. Sleator--Tarjan--Thurston \cite{tarjan} and Pournin \cite{diameterpournin} have determined the diameter of this polytope using hyperbolic and combinatorial methods. The flip graph of the undecorated $\puncp n$ is the 1-skeleton of the polytope named \emph{cyclohedron} or type-B associahedron, introduced by Bott-Taubes. It is also the flip graph of centrally symmetric triangulations of an ideal polygon. See \cite{PanGor21} for the work of Panina--Gordon on the triangulations of this surface.

The main result of this paper is to give the topology of the arc complexes of partially decorated polygons. We prove that
\begin{theorem} \label{thm: A}
	For $n\geq 3$ and $2\leq r\leq n$, the arc complex $\hac{\Pi}$ of a partially decorated ideal polygon $\Pi\in\pdep nr$ is $PL$-homeomorphic to a closed ball of dimension $n+r-4$.
\end{theorem}

\begin{theorem} \label{thm: B}
	For $n\geq 1$ and $1\leq r\leq n$, the arc complex $\hac{\Pi}$ of a partially decorated once-punctured polygon $\Pi\in\pdepu nr$ is $PL$-homeomorphic to a closed ball of dimension $n+r-2$.
\end{theorem}

The motivation behind this topological result comes from hyperbolic geometry. A \emph{horoball connection} is the geodesic segment subtended by the horoball decorations of two spikes. The length of this segment measures the distance between the two decorated spikes. The (lambda) lengths of the horoball connections were used by Penner to give a cellular decomposition of the decorated Teichmuller space of a surface. An \emph{admissible} deformation of a partially decorated polygon with at least one horoball connection is the set of all infinitesimal deformations of the hyperbolic metric that uniformly lengthens all horoball connections. This set forms an convex polyhedral cone, denoted by $\adm m$. In \cite{panda2023strip}, we prove that the cone over the arc complex of decorated ideal polygons as well as decorated once-punctured polygons, parametrises this admissible cone. 

This work was motivated by the work of Danciger--Guéritaud--Kassel \cite{dgk}, who proved a similar result in the case of convex cocompact hyperbolic surfaces. The admissible cone is formed by the set of all deformations that uniformly lengthen every closed geodesic. To a weighted sum of pairwise disjoint arcs, they assigned an infinitesimal deformation obtained by gluing strips along these arcs. This map is called the \emph{strip map}. Such a deformation is admissible. Strip deformations were first defined by Thurston \cite{thurston}. Goldman--Labourie--Margulis, in \cite{glm}, proved that the admissible cone forms an open convex cone. Moreover, a hyperbolic metric along with an admissible deformation corresponds to a flat Lorentzian non-compact 3-manifold called a \emph{Margulis spacetime}, whose fundamental group is given by a finitely generated free group. 
We generalised these results to general decorated hyperbolic surfaces \cite{panda2024dmst}. An admissible deformation in this case is an infinitesimal deformations that uniformly lengthens every horoball connection. In \cite{panda2024dmst}, we showed that the second definition implied the first one. Finally, we also gave a parametrisation of Margulis spacetimes, decorated by affine lightlike lines. 

The boundary of the admissible cone consists of weakly-lengthening deformations. It is very difficult to study these deformations using strip deformation because the polytope has infinitely many facets and the full arc complex is not a PL-manifold in general. Nevertheless, we take advantage of the small surfaces like decorated polygons that have finite ball-like arc complexes in order to get a better insight of the weakly lengthening deformations of a general surface. In particular, we get a parametrisation of the boundary using strip deformations:

\begin{theorem}
	Let $\Pi$ be a partially decorated (possibly once-punctured) polygon with a metric $m\in \tei \Pi$. Then the strip map $f:C \hac \Pi\longrightarrow \tang \Pi$ is a homeomorphism onto $\overline{\adm m}$.
\end{theorem}

The proofs of our main theorems \ref{thm: A},\ref{thm: B}, are done in a completely combinatorial and topological setting. To a partially decorated (resp. once-punctured) hyperbolic polygon $\Pi\in \pdep nt$ one can associate a compact (resp. once-punctured) polygon $\mathcal P$ with $n+r$ vertices, $r$ of which are coloured red and the rest are colored blue. The red vertices correspond to the decorated spikes and the blue vertices correspond to the boundary edges of the hyperbolic polygon. Then the subcomplex $\cac$ of the arc complex $\mathcal A(\mathcal{P})$, generated by $R-B$ arcs and $B-B$ arcs is isomorphic to the corresponding hyperbolic polygon. In fact, we prove something stronger:
\begin{theorem}
	The subcomplex $\cac$ of a polygon $\poly m$ with any $R-B$ bicolouring is a closed ball of dimension $m-4$. Similarly, the subcomplex $\cacp$ of a once-punctured polygon $\punc m$ with any bicolouring is a closed ball of dimension $m-2$.
\end{theorem}
The proof of our main theorems are motivated by the work of Wilson \cite{wilson}. He proved that the arc and curve complex of a Möbius strip with marked points on the boundary is spherical. He also reproved the sphericity of the arc complexes of a convex polygon and a once-punctured polygon. The main ingredient he used was the shellability of pure simplicial complexes. This is an ordering of all the maximal simplices of the complex so that the intersection of the $k$-th simplex with the union of the first $(k-1)$ simplices is always a pure complex of codimension one. He gave shelling orders for the arc complexes of all three surfaces and then concluded using a result by Danaraj and Klee \cite{danaraj} which states that a shellable $d$-pseudo-manifold without boundary is a combinatorial sphere.
We follow a similar approach.

As a final corollary, we have that,

\begin{theorem}
	Let $\Pi$ be a partially decorated $n$-gon with $r$ decorated spikes and $h\leq r$ marked boundary horoball connections. Then, the arc complex $\hac {\Pi}$ is PL-homeomorphic to a closed ball of dimension $n+r-h-4$ (resp. $n+r-h-2$) , for $\Pi\in\pdep nr$ (resp. $ \pdepu nr$), except when $h=r=n$, in which case there are no permitted arcs. 
\end{theorem}

This helps us parametrise the maximal dimensional facets of the admissible cone using strip deformations. A facet on the boundary of the admissible cone corresponds to all those infinitesimal deformations that lengthen every horoball connection except one. 

\paragraph{Organisation of the paper.}The paper is structured into two parts in the following way:
The first consists of combinatorial and topological proofs. The second part discusses the application in hyperbolic geometry. Section  \ref{setup} recapitulates the necessary vocabulary and results from simplicial complexes, arc complexes and hyperbolic geometry. Section \ref{sectionmain} contains the proofs of our main theorems. Section \ref{appli} describes the link between the arc complexes of the Euclidean polygons and the decorated hyperbolic polygons.
\paragraph{Acknowledgements.}  
This work was partially done at Université de Luxembourg supported by Luxembourg National Research Fund OPEN grant O19/13865598. It was completed at Université Sorbonne Paris Nord, CNRS, Laboratoire d'Informatique de Paris Nord, LIPN. I would like to thank my supervisors Fran\c cois Gu\'eritaud, Hugo Parlier and Lionel Pournin for helpful discussions and encouragement.

\section{Setup}\label{setup}
\subsection{Simplicial complex}
In this section we recall relevant definitions and results on finite simplicial complexes.

A simplicial complex is called \emph{pure} if all of its maximal simplices have the same dimension.
The \emph{dual graph} of a simplicial complex is the graph whose vertices are the maximal simplices and two vertices are joined by an edge if the corresponding maximal simplices share a codimension 1 face. A pure simplicial complex is said to be \emph{strongly connected} if its dual graph is connected.
\vspace{0.5em}

	A \emph{$d$-pseudo-manifold with boundary} is a pure strongly connected $d$-simplicial complex in which every $(d-1)$-simplex is contained in atmost two $d$-simplices. Note that the boundary of such a simplicial complex is formed by all $(d-1)$-simplices that are contained in exactly one $d$-simplex.
\vspace{0.5em}

Next we recall the definition of a shelling of pure simplicial complexes.

	Let $X$ be a pure finite simplicial complex of dimension $d$. A \emph{shelling} of $X$ is an enumeration of its maximal simplices $\mathcal{T}:( C_1,\ldots,C_n)$ such that for every $1\leq k\leq n$, the intersection $\pa{\bigcup\limits_{j=1}^{k-1}C_j} \bigcap C_k$ is a pure simplicial complex of dimension $d-1$.

The following is a lemma linking shellability and join of two simplicial complexes that we shall use in the proof of our main theorems. See \cite{joinshelling} for a proof.
\begin{lem}\label{join}
	Two complexes $X,Y$ are shellable if and only if $X\Join Y$ is shellable.
\end{lem}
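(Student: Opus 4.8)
The plan is to prove the two implications separately, in each case trading the definition of shelling above for the standard pairwise reformulation: for a pure $d$-dimensional complex, an enumeration $(C_1,\dots,C_n)$ of its facets is a shelling if and only if for all $i<k$ there are an index $l<k$ and a vertex $v\in C_k$ with $C_i\cap C_k\subseteq C_l\cap C_k=C_k\setminus\{v\}$. This is equivalent to the stated definition because $\left(\bigcup_{j<k}C_j\right)\cap C_k=\bigcup_{j<k}(C_j\cap C_k)$ is a pure $(d-1)$-complex precisely when each face $C_j\cap C_k$ is contained in some $C_l\cap C_k$ of dimension $d-1$. Two elementary facts about the join will be used throughout: the facets of $X\Join Y$ are exactly the sets $F\cup G$ with $F$ a facet of $X$ and $G$ a facet of $Y$; and, since $X$ and $Y$ have disjoint vertex sets, $(F\cup G)\cap(F'\cup G')=(F\cap F')\cup(G\cap G')$. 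In particular $X\Join Y$ is automatically pure, of dimension $\dim X+\dim Y+1$, so purity will never need separate attention.

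For the direction ``$X,Y$ shellable $\Rightarrow X\Join Y$ shellable'', I would fix shellings $(F_1,\dots,F_p)$ of $X$ and $(G_1,\dots,G_q)$ of $Y$ and order the facets $F_i\cup G_j$ of $X\Join Y$ lexicographically, by $i$ first and then $j$. Verifying the pairwise criterion for $F_i\cup G_j$ preceding $F_{i'}\cup G_{j'}$ splits into two cases. If $i<i'$, the shelling of $X$ yields $l<i'$ and $v\in F_{i'}$ with $F_i\cap F_{i'}\subseteq F_l\cap F_{i'}=F_{i'}\setminus\{v\}$; then $F_l\cup G_{j'}$ comes earlier and is the required witness, since $(F_l\cup G_{j'})\cap(F_{i'}\cup G_{j'})=(F_{i'}\cup G_{j'})\setminus\{v\}$ while $(F_i\cup G_j)\cap(F_{i'}\cup G_{j'})=(F_i\cap F_{i'})\cup(G_j\cap G_{j'})\subseteq(F_{i'}\setminus\{v\})\cup G_{j'}$. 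If $i=i'$ and $j<j'$, the shelling of $Y$ yields $l<j'$ and $w\in G_{j'}$ with $G_j\cap G_{j'}\subseteq G_l\cap G_{j'}=G_{j'}\setminus\{w\}$, and $F_i\cup G_l$ is the witness by the symmetric computation. Because intersections split, these checks are short.

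For the converse, the guiding observation is that $X$ appears inside $X\Join Y$ as a link: for any facet $\tau$ of $Y$, a face $\sigma\cup\upsilon$ of $X\Join Y$ lies in $\operatorname{lk}_{X\Join Y}(\tau)$ iff $\upsilon\cap\tau=\emptyset$ and $\upsilon\cup\tau\in Y$, and maximality of $\tau$ forces $\upsilon=\emptyset$, so this link is exactly $X$; symmetrically $Y$ is the link of any facet of $X$. I would therefore first isolate the auxiliary statement that the link of a face of a pure shellable complex is pure shellable, and prove it by restriction: given a shelling $(D_1,\dots,D_N)$ of a pure complex $Z$ and $\tau\in Z$, the sets $D_i\setminus\tau$ for those $i$ with $\tau\subseteq D_i$, kept in the induced order, form a shelling of $\operatorname{lk}_Z(\tau)$. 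These are exactly the facets of the link, each occurring once; and for $\tau\subseteq D_i$ preceding $\tau\subseteq D_k$, the witness $D_l$ with $D_i\cap D_k\subseteq D_l\cap D_k=D_k\setminus\{v\}$ supplied by the shelling of $Z$ automatically contains $\tau$, because $\tau\subseteq D_i\cap D_k\subseteq D_k\setminus\{v\}$ shows $v\notin\tau$ and hence $\tau\subseteq D_l$; deleting $\tau$ then transfers the inclusion to the link. Applying this once with $\tau$ a facet of $Y$ and once with a facet of $X$ gives that $X$ and $Y$ are shellable.

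I expect the only delicate point to be in the converse. The lexicographic argument is routine case-checking, but the converse requires genuine care in confirming that restricting a shelling of $X\Join Y$ to the facets containing a fixed facet $\tau$ of $Y$ recovers every facet of $X$ exactly once and carries the shelling witnesses along; the one essential ingredient making this work is the maximality of $\tau$ within $Y$, which is precisely what prevents the deleted vertex from lying in $\tau$ and so keeps the witness facet inside the restricted list.
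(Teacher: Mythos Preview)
Your argument is correct. The paper does not actually prove this lemma: it merely cites an external reference (\cite{joinshelling}), so there is no in-paper proof to compare against. What you wrote is the standard self-contained argument---the lexicographic product order for the forward implication, and for the converse the observation that $X=\operatorname{lk}_{X\Join Y}(\tau)$ for any facet $\tau$ of $Y$, together with the fact that restricting a shelling to the facets containing a fixed face yields a shelling of the link. Your verification of this last fact is careful and correct; the key point, which you identify explicitly, is that the shelling witness $D_l$ for a pair $D_i,D_k$ both containing $\tau$ must itself contain $\tau$, since $\tau\subseteq D_i\cap D_k\subseteq D_l\cap D_k\subseteq D_l$.
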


We will use the following result by Danaraj and Klee \cite{danaraj} to prove our main theorems.
\begin{thm}\label{topo}
	A shellable $d$-pseudo-manifold with boundary is $PL$-homeomorphic to a closed ball of dimension $d$.
\end{thm}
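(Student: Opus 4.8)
The plan is a double induction: an outer induction on the dimension $d$, and, for fixed $d$, an inner induction on the number $n$ of maximal simplices appearing in a chosen shelling $\mathcal{T}=(C_1,\dots,C_n)$ of $X$. I take for granted the hypothesis (implicit in the conclusion) that the boundary $\partial X$ is nonempty — otherwise $X$ is a shellable closed pseudo-manifold, which the same argument identifies as a $PL$-sphere rather than a ball. The base cases are immediate: for $n=1$, $X=C_1$ is a single $d$-simplex, a $PL$ $d$-ball; and $d=0$ (or $d=1$) is trivial.

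The combinatorial core is the following claim, whose proof invokes only the outer inductive hypothesis in dimension $d-1$: \emph{every nonempty proper pure $(d-1)$-dimensional subcomplex $B$ of the boundary $\partial\sigma$ of a $d$-simplex $\sigma$ is a $PL$ $(d-1)$-ball}. Such a $B$ is the union of $p$ of the $d+1$ facets of $\partial\sigma$, with $1\le p\le d$, together with all their faces. Since $\partial\sigma$ is a $(d-1)$-sphere, each $(d-2)$-face lies in exactly two facets of $\partial\sigma$, hence in at most two of $B$; any two distinct facets of $\partial\sigma$ meet in a $(d-2)$-face, so the dual graph of $B$ is complete and $B$ is strongly connected; and a facet of $\partial\sigma$ not lying in $B$ produces a $(d-2)$-face contained in exactly one facet of $B$, so $\partial B\ne\emptyset$. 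Thus $B$ is a $(d-1)$-pseudo-manifold with boundary. Furthermore any enumeration $F_1,\dots,F_p$ of its facets is a shelling: adjoining $F_t$, the intersection $\big(\bigcup_{s<t}F_s\big)\cap F_t$ is the union of the $t-1$ distinct facets of the $(d-1)$-simplex $F_t$ cut out by the earlier $F_s$, which is pure of dimension $d-2$. By the outer inductive hypothesis in dimension $d-1$, $B$ is therefore a $PL$ $(d-1)$-ball.

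Now run the inner induction: set $X_k:=C_1\cup\dots\cup C_k$ and show each $X_k$ is a $PL$ $d$-ball, the case $k=1$ being a base case. Suppose $X_k$ is a $PL$ $d$-ball and put $R:=X_k\cap C_{k+1}$; by the definition of a shelling $R$ is a pure $(d-1)$-dimensional subcomplex of $\partial C_{k+1}$, so a union of $p\ge 1$ facets of the $d$-simplex $C_{k+1}$. I first rule out $p=d+1$, i.e.\ $R=\partial C_{k+1}$: in that case every facet of $C_{k+1}$ already lies in $X_k$, and since a $(d-1)$-face of $X\supseteq X_{k+1}$ lies in at most two facets, each such facet of $C_{k+1}$ lies in exactly one facet of $X_k$, so $\partial C_{k+1}\subseteq\partial X_k$ as subcomplexes of the $(d-1)$-sphere $\partial X_k$; a full-dimensional subcomplex of a closed pseudo-manifold which is itself a closed pseudo-manifold must be the whole thing, so $\partial C_{k+1}=\partial X_k$, whence every $(d-1)$-face of $X_{k+1}$ lies in exactly two facets, i.e.\ $\partial X_{k+1}=\emptyset$, forcing $k+1=n$ and $\partial X=\emptyset$ — contradicting the standing assumption. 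Hence $1\le p\le d$, and the claim of the previous paragraph gives that $R$ is a $PL$ $(d-1)$-ball; the same "at most two facets" count shows each of the $p$ facets comprising $R$ lies in exactly one facet of $X_k$, so $R\subseteq\partial X_k$, while trivially $R\subseteq\partial C_{k+1}$.

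Finally, $X_{k+1}=X_k\cup C_{k+1}$ is the union of two $PL$ $d$-balls along the $PL$ $(d-1)$-ball $R$, which lies in the boundary sphere of each. Since the $d$-simplex $C_{k+1}$, with $\partial C_{k+1}$ split into $R$ and the union $R'$ of its remaining facets (another $PL$ $(d-1)$-ball by the claim, with $R\cap R'=\partial R$ a $PL$ $(d-2)$-sphere), is $PL$-homeomorphic to an external collar on $R$, attaching it to $X_k$ along $R\subseteq\partial X_k$ does not change the $PL$-homeomorphism type; hence $X_{k+1}$ is again a $PL$ $d$-ball. Taking $k=n$ shows $X=X_n$ is a $PL$ $d$-ball, completing both inductions. \textbf{The main obstacle is the bookkeeping in the inner step}: excluding the "cap-off" case $R=\partial C_{k+1}$ and verifying $R\subseteq\partial X_k$, and then checking that adjoining $C_{k+1}$ really is an external-collar attachment so that the standard $PL$ gluing lemma applies — a point routine in the piecewise-linear category but requiring some care with collars and the bicollaredness of $\partial R$. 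The combinatorial claim of the second paragraph, though indispensable, is comparatively straightforward.
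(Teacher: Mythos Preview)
The paper does not give its own proof of this theorem: it is quoted as a result of Danaraj and Klee \cite{danaraj} and used as a black box. So there is no in-paper argument to compare against.

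That said, your sketch is essentially the classical proof (and, modulo phrasing, the argument in Danaraj--Klee, which in turn follows Bing). The double induction, the lemma that any nonempty proper union of facets of $\partial\sigma$ is a shellable $(d-1)$-pseudo-manifold with boundary (hence a $(d-1)$-ball by the outer induction), the exclusion of the cap-off case $R=\partial C_{k+1}$ via the pseudo-manifold ``at most two'' condition, and the final gluing of two $PL$ $d$-balls along a $PL$ $(d-1)$-ball in each boundary --- all of this is correct and standard. Two small places where you could tighten the write-up: (i) when you assert that $\partial C_{k+1}\subseteq\partial X_k$ forces equality, spell out that this uses strong connectedness of the sphere $\partial X_k$ together with the fact that every $(d-2)$-face lies in exactly two facets on each side; (ii) the ``external collar'' justification at the end is really an invocation of the standard $PL$ lemma that the union of two $PL$ $d$-balls along a $PL$ $(d-1)$-ball embedded in each boundary is again a $PL$ $d$-ball (e.g.\ Rourke--Sanderson), and it would be cleaner to cite that directly rather than argue via collars.
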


\subsection{Arcs and arc complexes}
In this section we recall the relevant definitions and results the arc complex of polygons that will be used in the rest of the paper.

We denote by $S_{g,n}$ a surface with genus $g\,(\geq0)$ and $n\,(\geq 1)$ marked points on its boundary. 
\begin{defi}\label{defiarc}
The arc complex $\mathcal{A}(S_{g,n})$ of a finite-type surface $S_{g,n}$ with marked points on its boundary is a simplicial complex defined in the following way: the 0-skeleton is given by the embedded arcs with their endpoints on the marked points of the polygon, up to homotopy relative to the endpoints. For $k\geq 1$, every $k$ simplex is given by a $(k+1)$-tuple of pairwise disjoint and distinct arcs, up to homotopy.
\end{defi}
In this section, we are going to consider only two types of surfaces: convex polygons $\poly m$ $(m\geq 4)$ and once-punctured convex polygons $\punc m$, $(m\geq 2)$.
In the case of a convex polygon, these homotopy classes are simply the diagonals joining two vertices of the polgon. To avoid confusion, we will refer to the arcs of a punctured polygon as diagonals as well. A \emph{maximal} diagonal of $\punc m$ is the diagonal with both its endpoints coinciding on a vertex of the polygon. The blue diagonal in the left panel of Fig. \eqref{acpuncsq} is a maximal diagonal.

 \begin{figure}
 	\begin{subfigure}{\linewidth}
 		\centering
 		\includegraphics[width=12cm]{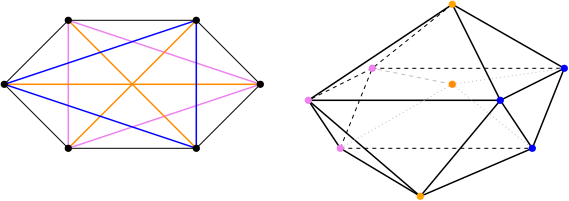}
 		\subcaption{The diagonals of a hexagon $\poly6$ and $\ac[6]$}
 		\label{achexa}
 	\end{subfigure}
 	\vspace{0.8cm}
 	
 	\begin{subfigure}{\linewidth}
 		\centering
 		\includegraphics[width=12cm]{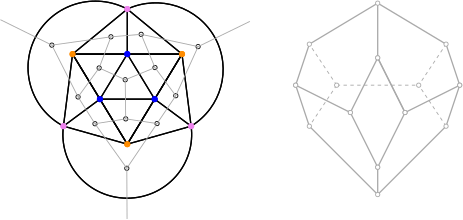}
 		\subcaption{The flattened perspective of $\ac[6] $ and its dual associahedron of dimension 2}
 		\label{asso}
 	\end{subfigure}
 	\caption{}
 \end{figure}

 The following is a classical fact from combinatorics.  See, for instance, \cite{penner} for a proof by Penner. 
 \begin{thm}
The arc complex of a convex polygon $\poly m$ ($m\geq 4$) is PL-homemorphic to a sphere of dimension $m-4$. 
 \end{thm}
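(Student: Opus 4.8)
This statement is classical, so one option is simply to invoke it: Penner gives a proof in \cite{penner} (attributing the original argument to Whitney), and it also follows from any polytopal realisation of the associahedron (for instance \cite{lee}, \cite{fz}) once one identifies $\ac$ with the boundary complex of the dual \emph{simplicial} associahedron, since boundary complexes of polytopes are shellable $PL$-spheres. For a self-contained argument that fits the framework of \secref{setup}, I would instead prove the stronger assertion that $\ac$ is a \emph{shellable $(m-4)$-pseudo-manifold without boundary}, and then apply the closed-manifold analogue of \thmref{topo}, also due to Danaraj and Klee: a shellable $d$-pseudo-manifold without boundary is $PL$-homeomorphic to $S^{d}$.

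First I would record the combinatorial structure. Since $\ac$ is flag, its maximal simplices are the inclusion-maximal families of pairwise disjoint diagonals, i.e. the triangulations of $\poly m$; every triangulation of a convex $m$-gon uses exactly $m-3$ diagonals, so $\ac$ is pure of dimension $m-4$. It is strongly connected because its dual graph is the diagonal-flip graph of triangulations, which is connected. Finally, a codimension-one simplex is a set of $m-4$ pairwise disjoint diagonals; by an Euler count it dissects $\poly m$ into $m-4$ triangles and one quadrilateral, and that quadrilateral has exactly two diagonals, neither of which can be exchanged for another diagonal without a crossing. Hence every $(m-5)$-simplex lies in exactly two maximal simplices and $\partial\ac=\emptyset$.

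The heart of the proof is the shelling, which I would build by induction on $m$; the base case $m=4$ is $S^{0}$, since the two diagonals of a square cross. Fix the ear diagonal $\delta=v_2v_m$ and partition the triangulations: let $D$ be those containing $\delta$, and for $3\le r\le m-1$ let $E_r$ be those not containing $\delta$ whose largest-index diagonal at $v_1$ is $v_1v_r$. This is a genuine partition, because a triangulation avoiding $\delta$ must use some diagonal at $v_1$. Cutting along $\delta$ identifies $D$ with the cone $\langle\delta\rangle\Join\ac[m-1]$ on the arc complex of the $(m-1)$-gon $v_2v_3\cdots v_m$; and a triangulation in $E_r$ is forced to contain the triangle $v_1v_rv_m$, so $E_r$ is identified with an iterated join of simplices (on whichever of $v_1v_r$, $v_rv_m$ are diagonals) with the arc complexes $\ac[r]$ and $\ac[m-r+1]$ of the sub-polygons $v_1\cdots v_r$ and $v_r\cdots v_m$. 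By the inductive hypothesis these smaller arc complexes are shellable, so by \lemref{join} every stratum is shellable; fix such internal shellings. I would then enumerate the maximal simplices of $\ac$ stratum by stratum in the order $D,\,E_3,\,E_4,\,\ldots,\,E_{m-1}$, each stratum ordered by its internal shelling.

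It remains to verify the shelling condition, and this is the step I expect to be the main obstacle. The two facts that make it work are: no simplex of $D$ nor of $E_t$ with $t<r$ contains the diagonal $v_1v_r$ (it crosses $\delta$, respectively it would be a $v_1$-diagonal larger than allowed in $E_t$); and the facet of a triangulation $T\in E_r$ obtained by flipping $v_1v_r$ always lands in $D$ or in some $E_t$ with $t<r$. Consequently, when $T\in E_r$ is appended, $\bigl(\bigcup_{j<k}C_j\bigr)\cap C_k$ is the union of the subcomplex of all faces of the facet $T\setminus\{v_1v_r\}$ with the part of $\partial T$ already covered inside the internal shelling of $E_r$; both are pure of dimension $m-5$, hence so is the union. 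The cone $D$ and the degenerate top stratum $E_{m-1}$ are handled the same way but more easily, and with the internal shellings chosen compatibly one checks that every such intersection is pure of dimension $m-5$. Applying the Danaraj--Klee theorem with $d=m-4$ then yields the claim.
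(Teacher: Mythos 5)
Your proposal is correct, but it is not the route the paper takes for this particular theorem: the paper records it as a classical fact and simply cites Penner (who attributes the argument to Whitney), offering no proof of its own; the shellability and pseudo-manifold structure are separately quoted from Wilson as \thmref{ac}. What you supply instead is a self-contained reconstruction of that shelling argument, and it is worth noting that your decomposition -- stratify triangulations by the last diagonal incident to $v_1$, identify each stratum as a join of two smaller arc complexes via the forced triangle $v_1v_rv_m$, inherit shellability through \lemref{join}, and concatenate -- is, up to relabeling, precisely the strategy the paper itself later uses for the bicoloured complex $\cac$ in \thmref{propcac}: your $E_r$ is the paper's $\str{\sigma_r}$, your $D=\str{\delta}$ plays the role of the $\sigma_2$-stratum, and your flip of $v_1v_r$ into an earlier stratum is the paper's flip of $(m,k)$ into $(1,j)$ (you run the indices in the opposite order, which works equally well). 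So you have not contradicted the paper; you have effectively proved for the uncoloured polygon what the paper only proves in the bicoloured generalisation, and then applied the boundaryless Danaraj--Klee theorem rather than \thmref{topo}. Two small points to tighten: for $r=3$ and $r=m-1$ one of the two subpolygon arc complexes degenerates to the empty complex (a triangle or digon has no diagonals), so you should say explicitly that the join collapses there and take the empty complex as trivially shellable; and in the shelling verification it is cleaner to phrase things in terms of Property \eqref{wilson} as the paper does -- for $S,T$ in the same stratum the witness comes from the internal shelling, and for $S,T$ in different strata the witness $P$ is the flip of $v_1v_r$, which lands in a strictly earlier stratum by your ``fact 2'' and therefore precedes $T$.
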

 Fig.\eqref{achexa} shows the diagonals and the arc complex of a hexagon. The diagonals corresponding to the 0-skeleton of a maximal simplex of $\ac$ decomposes the polygon $\poly m$ into triangles. In the case of a punctured polygon $\punc m$, the diagonals decompose it into triangles and a once-punctured disc with one marked point on its boundary. Hence a maximal simplex will be alternatively referred to as a \emph{triangulation} of the polygon. A triangulation, all of whose diagonals are incident on the same vertex, is called a \emph{fan} triangulation.
 
 The dual graph to $\mathcal{A}(\mathcal{P})$ is called a \emph{flip graph}.
  In the case of a convex polygon $\poly n$, the dual graph is the 1-skeleton of ($n-4$)-dimensional associahedron, which we mentioned in the introduction. See Fig.\eqref{asso} for the associahedron of dimension 3.
 
 The following theorem about the arc complex of once-punctured polygons was proved by Penner in \cite{penner}.
\begin{thm}\label{idpac}~
	The arc complex $\acp$ of a punctured $m$-gon, ($m\geq 2$), is PL-homeomorphic to a sphere of dimension $n-2$.
\end{thm}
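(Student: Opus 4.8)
The plan is to prove that $\acp$ is a shellable pseudo-manifold without boundary of dimension $m-2$, and then to invoke the theorem of Danaraj and Klee \cite{danaraj} (the no-boundary companion of Theorem~\ref{topo}), which asserts that such a complex is a combinatorial sphere, hence $PL$-homeomorphic to a sphere of dimension $m-2$. Recall that a maximal simplex of $\acp$ is a triangulation of $\punc m$: its diagonals cut the punctured polygon into $m-1$ triangles together with exactly one once-punctured monogon, whose single boundary edge is a maximal diagonal based at a vertex of the polygon. In particular every triangulation contains precisely one of the $m$ maximal diagonals $\delta_1,\dots,\delta_m$, and counting edge--face incidences shows it has exactly $m-1$ diagonals; so $\acp$ is pure of dimension $m-2$.

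Next I would check the pseudo-manifold conditions. Strong connectivity is the classical fact that the flip graph of $\punc m$ is connected. For the local condition, let $\tau$ be a codimension-one simplex, that is, a family of $m-2$ pairwise disjoint diagonals; it cuts $\punc m$ into $m-1$ regions, all triangles except one, which is either a quadrilateral or a once-punctured bigon. A quadrilateral can be triangulated in exactly the two usual ways (the diagonal flip), and a once-punctured bigon with corners $v_i,v_k$ in exactly two ways as well, by adjoining the maximal diagonal based at $v_i$ or the one based at $v_k$. Hence every codimension-one simplex lies in exactly two maximal simplices, and $\acp$ is an $(m-2)$-pseudo-manifold without boundary.

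The shelling is the heart of the matter and the step I expect to be the main obstacle. The maximal simplices fall into $m$ blocks according to which $\delta_i$ they contain, and each block is the set of top simplices of the star $\operatorname{star}(\delta_i)=\delta_i\Join\operatorname{link}(\delta_i)$; cutting out the once-punctured monogon at $v_i$ identifies $\operatorname{link}(\delta_i)$ with the arc complex of an $(m+1)$-gon, which is a shellable $(m-3)$-sphere (shellable by \cite{wilson}, or by the analogous polygon argument), so by Lemma~\ref{join} each $\operatorname{star}(\delta_i)$ is shellable. Concatenating these block shellings in the naive order $1,\dots,m$ does not work, however: when the $\delta_i$-block opens with a triangulation $C$, the facet $C\setminus\{\delta_i\}$ must already be covered, and it is covered only by the star of the maximal diagonal obtained by flipping $C$ across $\delta_i$, whose index is not controlled by $i$. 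I would instead fix a vertex $v_1$ and order the triangulations by a lexicographic rule adapted to $v_1$---within each block listing first the triangulations whose restriction away from the monogon is closest to the fan at $v_1$, and interleaving the blocks so that the flip of each newly listed triangulation towards $v_1$ lands in one already listed---following the pattern of Wilson's shelling orders \cite{wilson} and of line shellings of the associahedron. The crux is to verify that this rule is a genuine shelling, i.e.\ that $\bigl(\bigcup_{j<k}C_j\bigr)\cap C_k$ is pure of dimension $m-3$ for every $k$, the delicate cases being those where the flip relating $C_k$ to an earlier triangulation occurs at the puncture.

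With $\acp$ a shellable $(m-2)$-pseudo-manifold without boundary, the Danaraj--Klee theorem then gives that it is a combinatorial, hence $PL$, sphere of dimension $m-2$, which is exactly the claim.
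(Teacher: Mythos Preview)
The paper does not prove Theorem~\ref{idpac}; it simply attributes the result to Penner \cite{penner} and, separately, records the pseudo-manifold and shellability statements as Wilson's (Theorem~\ref{acp}). So there is no ``paper's own proof'' to compare against---what you have written is in fact the Wilson route that the paper cites but does not reproduce.

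Your verification of the pseudo-manifold-without-boundary conditions is correct: purity via the count of $m-1$ diagonals in a triangulation, strong connectivity via the flip graph, and the local two-facet condition via the quadrilateral/punctured-bigon dichotomy for the non-triangular region.

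The shelling step, as you yourself flag, is where your proposal stops short of a proof. Your diagnosis is accurate: concatenating \emph{arbitrary} shellings of the blocks $\operatorname{star}(\delta_1),\dots,\operatorname{star}(\delta_m)$ can fail, because the first facet of block $k$ need not meet the earlier blocks in codimension one. The remedy, however, is not the interleaving you sketch but a compatible choice of the within-block shellings. Each $\operatorname{link}(\delta_k)\cong\ac[m+1]$ admits a shelling that begins at a fan based at one of the two copies of $v_k$; with that choice, the opening facet of block $k$ is a single flip (across $\delta_k$) away from a facet already listed in an earlier block, and more generally one can verify Property~\ref{wilson} for the concatenated order by flipping either $\delta_k$ (landing in an earlier block) or a diagonal of the $(m{+}1)$-gon (landing earlier within the same block). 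This is exactly the mechanism the paper uses in Theorem~\ref{shellp} for the bicoloured case, and it is what Wilson carries out for the uncoloured $\acp$. Filling in that verification would make your argument complete.
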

Fig.\ref{acpuncsq} gives an illustration of the arc complex of a once-punctured quadrilateral. The blue diagonal in the left panel is a maximal arc.
\begin{figure}[!ht]
	\centering
	\includegraphics[width=10cm]{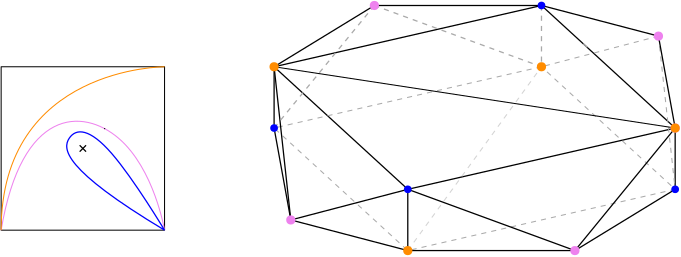}
	\caption{The three types of diagonals and the full arc complex of $\punc{4}$}
	\label{acpuncsq}
\end{figure}

\paragraph{Bicolourings.}\label{bicolouring}
Given a convex (possibly punctured) polygon we consider all possible colourings of its vertices with two colours, say red $(R)$ and blue $(B)$, so that there is a vertex of each colour. Such a colouring is called a \emph{bicolouring}. A bicolouring is called \emph{non-trivial} if there is at least one $R-R$ diagonal. 
\begin{figure}[h]
	\centering
	\includegraphics[width=12cm]{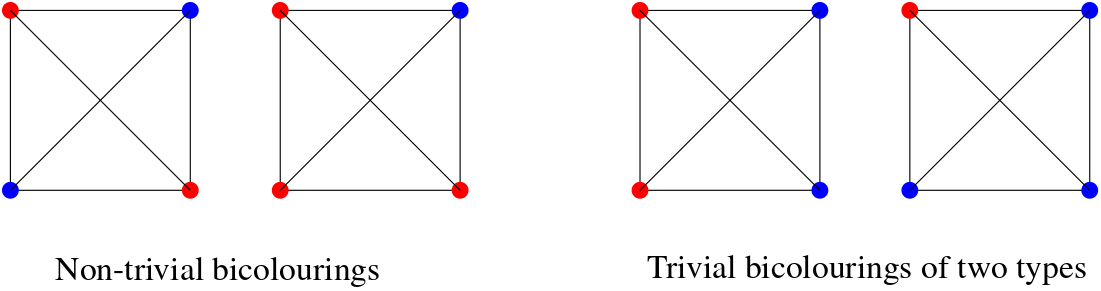}
	\caption{}
	\label{bicol}
\end{figure}

See Fig.\eqref{bicol} for examples of trivial and non-trivial bicolourings of a quadrilateral $\poly 4$.


We denote by $\cac$  (resp. $\cacp$) the subcomplex of $\ac$ (resp. $\acp$) generated by the $R-B$ and $B-B$ diagonals only. We call these diagonals \emph{permitted} and the $R-R$ diagonals are called \emph{rejected}. A simplex of $\cac$ using permitted diagonals is called \emph{permissible}. 
\begin{rem}
Any fan triangulation of a bicoloured polygon based at a blue vertex is permissible.
\end{rem}
\begin{rem}
In the case of a trivial bicolouring, the subcomplex is the full arc complex because there are no rejected diagonals. This is why the bicolouring is named "trivial".
\end{rem}
\begin{figure}[!ht]
	\centering
	\includegraphics[width=12cm]{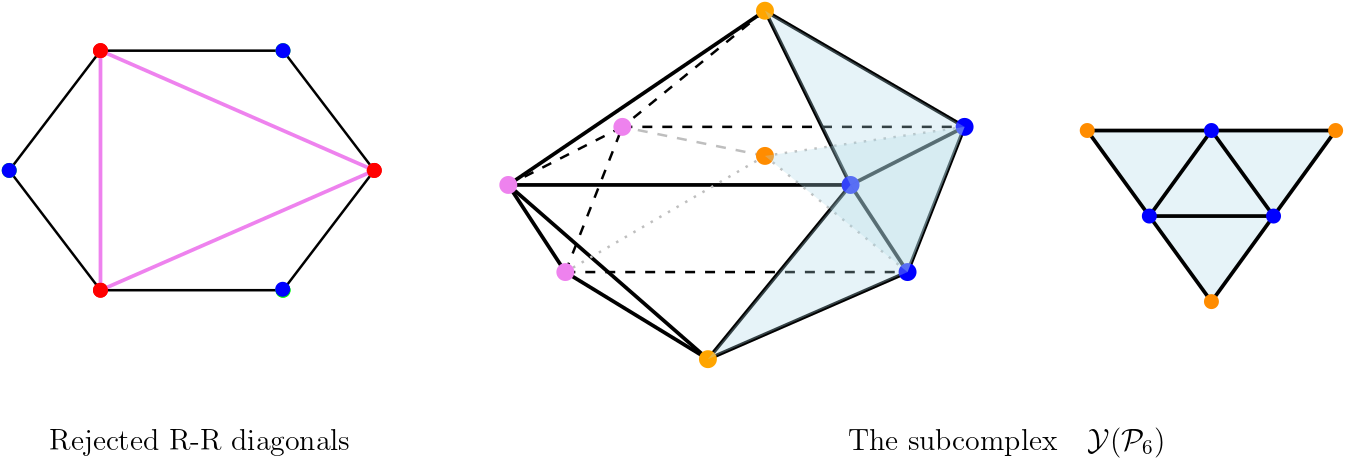}
	\caption{}
	\label{achexaalt}
\end{figure}
\begin{figure}[!ht]
	\centering
	\includegraphics[width=12cm]{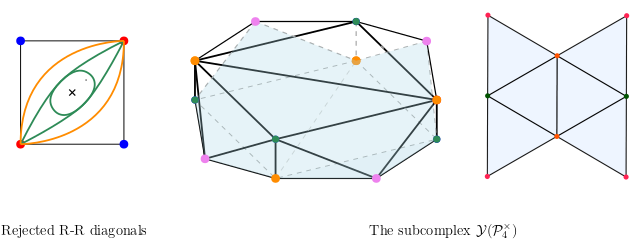}
	\caption{}
	\label{acsqalt}
\end{figure}

Figs. \eqref{achexaalt} and \eqref{acsqalt} show the subcomplexes $\cac[6]$ and $\cacp[4]$ when the polygons $\poly 6,\, \punc 4$ have alternate bicolouring. In both the cases, they are closed 2-balls.

\subsection{Shelling of finite arc complexes}
Wilson gave an equivalent definition for shelling in \cite{wilson}, in the context of arc complexes.
\begin{ppt}[Wilson]\label{wilson}
	Let $X$ be a pure simplicial complex. There exists an enumeration of all the maximal faces of $X$, denoted by $\mathcal{T}: (C_1,\ldots,C_n)$, such that for every two positive integers $j$ and $k$ satisfying $j<{k}\leq n$, there exists a positive integer $i<k$ such that
	\begin{enumerate}
		\item[(i)]\label{p1} $C_i\cap C_k$ is a codimension one simplex,
		\item[(ii)] \label{p2}$C_j\cap C_k\subset C_i\cap C_k$.
	\end{enumerate}
\end{ppt}
In \cite{wilson} (Proposition 3.5), Wilson proved that in the context of finite arc complexes, there is an equivalence between shellability and Property \eqref{wilson}. 

\begin{lem}
Suppose that $S_{g,n}$ is a surface with finite arc complex. Then the complex  $\mathcal{A}(S_{g,n})$ is shellable if and only if it satisfies Property \eqref{wilson}. 
\end{lem}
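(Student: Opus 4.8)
The plan is to prove a slightly stronger, purely combinatorial statement: the two conditions are equivalent for \emph{any} pure finite simplicial complex, the hypothesis ``finite arc complex'' serving only to guarantee that $\mathcal{A}(S_{g,n})$ has finitely many maximal faces and is pure, so that both formulations make sense. The argument rests on two elementary observations. First, the bookkeeping identity: for any enumeration $(C_1,\dots,C_n)$ of the $d$-dimensional maximal faces and any $k$,
\[
\Big(\bigcup_{j=1}^{k-1}C_j\Big)\cap C_k=\bigcup_{j=1}^{k-1}(C_j\cap C_k),
\]
and since the $C_j$ are pairwise distinct $d$-simplices, each $C_j\cap C_k$ with $j\neq k$ is a \emph{proper} face of $C_k$, so this complex always lies in the boundary $\partial C_k$. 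Second, the standard topological fact that a simplex is never the union of its proper faces; equivalently, if a facet $\sigma$ of $C_k$ is covered by finitely many faces of $C_k$, one of those faces already contains $\sigma$.

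For the implication ``Property \eqref{wilson} $\Rightarrow$ shellable'', I would fix an enumeration satisfying Property \eqref{wilson} and an index $k$ with $2\le k\le n$ (the value $k=1$ being vacuous, under the usual convention that the first facet of a shelling is placed freely). For each $j<k$ the property supplies $i<k$ with $C_i\cap C_k$ of codimension one and $C_j\cap C_k\subseteq C_i\cap C_k$; hence every face appearing in $\bigcup_{j<k}(C_j\cap C_k)$ lies in one of its own codimension-one faces of the form $C_i\cap C_k$, so the complex $\big(\bigcup_{j<k}C_j\big)\cap C_k$ is nonempty with all maximal faces of dimension $d-1$, i.e.\ pure of dimension $d-1$. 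That is exactly the shelling condition at step $k$.

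For the converse I would start from a shelling $(C_1,\dots,C_n)$ and fix $j<k\le n$. By hypothesis $L_k:=\big(\bigcup_{i<k}C_i\big)\cap C_k$ is pure of dimension $d-1$, so the face $C_j\cap C_k$ of $L_k$ sits inside some maximal face $\sigma$ of $L_k$; this $\sigma$ has dimension $d-1$ and is therefore a facet of $C_k$. Since $\sigma\subseteq\bigcup_{i<k}(C_i\cap C_k)$, writing $\sigma=\bigcup_{i<k}(\sigma\cap C_i\cap C_k)$ as a union of faces of $\sigma$ and using that a simplex is not the union of its proper faces, some $\sigma\cap C_i\cap C_k$ equals $\sigma$; then $\sigma\subseteq C_i\cap C_k$, and as $C_i\cap C_k$ is a proper face of $C_k$ (so of dimension at most $d-1$) this forces $\sigma=C_i\cap C_k$. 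Thus $C_i\cap C_k$ is codimension one, $i<k$, and $C_j\cap C_k\subseteq\sigma=C_i\cap C_k$, which is Property \eqref{wilson}.

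The proof is essentially routine, and I do not anticipate a genuine obstacle; the only points needing care are the degenerate cases (the base step $k=1$, and $0$-dimensional complexes, which should simply be declared trivially shellable) and a clean statement of the ``a simplex is not covered by its proper faces'' lemma. The one conceptual observation that makes everything work---and the reason Wilson's reformulation is phrased the way it is---is that, because each $C_j\cap C_k$ is a face of the single simplex $C_k$, purity of the step-$k$ intersection is equivalent to each such face being contained in a codimension-one face of that same shape.
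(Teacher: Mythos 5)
Your proof is correct and, modulo one small detour, matches what the paper does. The paper does not actually prove this lemma---it cites Wilson (Proposition 3.5) for it---but it does prove the analogous statement for pure subcomplexes (Lemma~\ref{equiv}) by essentially the argument you give. Your one genuine refinement is the observation that nothing about arc complexes is needed: the paper's proof of Lemma~\ref{equiv} invokes the hypothesis that ``$X$ is a subcomplex of the arc complex'' in order to conclude that $C_j\cap C_k$ is a simplex, but this holds in \emph{any} abstract simplicial complex, since the intersection of two faces (as vertex sets) is a subset of each and hence is itself a face. So, as you say, the equivalence really is valid for any finite pure simplicial complex; the ``finite arc complex'' hypothesis only guarantees finiteness and purity. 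One minor simplification you could make: the appeal to ``a simplex is not the union of its proper faces'' can be bypassed. Since $B_k$ is the union of the subcomplexes generated by the faces $C_i\cap C_k$ with $i<k$, any simplex $\sigma$ of $B_k$ is by definition already a face of some single $C_i\cap C_k$; once $\sigma$ has dimension $d-1$ and $C_i\cap C_k$ is a proper face of $C_k$ (so of dimension at most $d-1$), a dimension count forces $\sigma=C_i\cap C_k$ directly. Your attention to the degenerate cases ($k=1$ and the $0$-dimensional case) is appropriate and does not affect the substance.
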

In the following Lemma we show that the same holds for any pure subcomplex of the arc complex.
\begin{lem}\label{equiv}
	A pure codimension zero subcomplex $X$ of $\mathcal{A}(S_{g,n})$ is shellable if and only if it satisfies Property \eqref{wilson}.
\end{lem}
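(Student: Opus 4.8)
The plan is to mirror Wilson's argument (Proposition 3.5 in \cite{wilson}) for the full arc complex, checking that nothing in it uses more than the property of being a pure codimension-zero subcomplex. The statement is an equivalence, so there are two directions.

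\textbf{From shellability to Property \ref{wilson}.} Suppose $(C_1,\ldots,C_n)$ is a shelling of $X$, so for each $k$ the complex $D_k := \left(\bigcup_{j<k} C_j\right) \cap C_k$ is pure of dimension $d-1$, where $d = \dim X$. Fix $j < k$. Then $C_j \cap C_k$ is a face of $D_k$, hence (by purity of $D_k$) is contained in some $(d-1)$-dimensional face $F$ of $D_k$. Since $F \subset D_k \subset C_k$ and $F$ has dimension $d-1 = \dim C_k - 1$, the face $F$ is a codimension-one face of $C_k$. Now $F \subset \bigcup_{j'<k} C_{j'}$, and because $F$ is a codimension-one simplex of the pure complex $X$, it lies in at most\,---\,in fact I only need: it lies in\,---\,some maximal simplex $C_i$ with $i < k$; then $F \subset C_i \cap C_k$, and since $F$ is already codimension one in $C_k$ and $C_i\cap C_k$ is a proper face of $C_k$, we get $C_i \cap C_k = F$. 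Thus $C_i \cap C_k$ is a codimension-one simplex and $C_j \cap C_k \subset F = C_i \cap C_k$, giving (i) and (ii). (The only subtlety is ensuring $F$ lies in a \emph{previous} maximal simplex; this holds because $F \subset D_k$ by construction.)

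\textbf{From Property \ref{wilson} to shellability.} Conversely, assume the enumeration $(C_1,\ldots,C_n)$ satisfies (i)--(ii); I claim it is a shelling. Fix $k$ and set $D_k = \left(\bigcup_{j<k} C_j\right)\cap C_k$. Since $X$ has dimension $d$ and $D_k$ is a proper subcomplex of the simplex $C_k$, its dimension is at most $d-1$; purity of $D_k$ in dimension $d-1$ is what must be shown, i.e.\ every face of $D_k$ is contained in a $(d-1)$-face of $D_k$. A face of $D_k$ is contained in $C_j \cap C_k$ for some $j<k$; by (i)--(ii) there is $i<k$ with $C_i\cap C_k$ a codimension-one face of $C_k$ and $C_j\cap C_k \subset C_i\cap C_k$. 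Since $C_i \cap C_k \subset \bigcup_{j'<k}C_{j'}$, the face $C_i\cap C_k$ lies in $D_k$, is of dimension $d-1$, and contains the given face. Hence $D_k$ is pure of dimension $d-1$ (it is nonempty for $k\geq 2$ since the enumeration satisfies (i) with $j = k-1$), so the enumeration is a shelling.

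\textbf{Main obstacle.} The argument is essentially formal, so the only thing to be careful about is that the equivalence genuinely requires nothing beyond $X$ being a pure simplicial complex together with the finiteness coming from $X \subseteq \mathcal{A}(S_{g,n})$ being a subcomplex of a \emph{finite} arc complex; in particular the codimension-zero hypothesis guarantees $\dim X = \dim \mathcal{A}(S_{g,n})$ so that maximal simplices of $X$ really are top-dimensional and the dimension bookkeeping above is valid. I would remark that the proof is verbatim that of \cite[Proposition 3.5]{wilson}, the point being merely that it never invokes maximality of the complex $\mathcal{A}(S_{g,n})$, only purity; so the cleanest write-up simply points to Wilson and highlights this observation rather than reproducing the whole argument.
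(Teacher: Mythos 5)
Your proof is correct and takes essentially the same route as the paper: both directions adapt Wilson's Proposition 3.5 by the same dimension bookkeeping, with the same key observation that nothing in the argument uses maximality of $\mathcal{A}(S_{g,n})$ beyond purity and codimension-zero. The only cosmetic difference is that the paper explicitly notes that $C_j \cap C_k$ is a simplex (attributed to $X$ being a subcomplex of the arc complex), which you use implicitly; this holds automatically for abstract simplicial complexes, so there is no gap.
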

\begin{proof}
	We show that the enumeration $\mathcal{T}=C_1\ldots,C_n$ given by the property works as a shelling order. So we need to show that for every $1\leq k\leq n$, the simplicial complex $B_k:=\pa{\bigcup\limits_{j=1}^{k-1}C_j }\bigcap C_k$ is a pure simplicial complex of dimension $d-1$, where $d$ is the dimension of the subcomplex $X$ as well as $\mathcal{A}(S_{g,n})$. Let $\sigma \subset B_k$ be any simplex of codimension more than one. Then it is contained in $C_j\cap C_k$ for some $j\in \{1,\ldots,k-1\}$. Since $X$ is a subcomplex of the arc complex, the complex $C_j\cap C_k$ is in fact a simplex. From Property \eqref{wilson} we get that there is an $i<k$ such that $C_i\cap C_k$ is of dimension $d-1$ and $C_j\cap C_k\subset C_i\cap C_k\subset B_k$. Hence we get that the simplex $\sigma$ is not maximal in $B_k$ and the dimension of any maximal simplex containing $\sigma$ is $d-1$. 
	
	Conversely, let us suppose that $X$ is shellable with a shelling order $\mathcal{T}_n=(T_1,\ldots,T_n)$. Consider two maximal simplices $T_j, T_k$ with $j<k\leq n$. If $T_j\cap T_k=\varnothing$, there is nothing to show. So we assume that $T_j\cap T_k\neq \varnothing$. Once again, the intersection $T_j\cap T_k$ is a simplex because $X$ is a subcomplex of the arc complex. Let $B_k:= \pa{\bigcup\limits_{i=1}^{k-1} T_i }\cap T_k$ as before. From the hypothesis, $B_k$ is a pure simplicial complex of dimension $d-1$. So we get that $T_j\cap T_k$ is contained in a $(d-1)$-simplex $T_i\cap T_k$, where $T_i$ is a maximal simplex with $i<k$. This $T_i$ satisfies the two conditions of Property \eqref{wilson}.
\end{proof}
The orientable surfaces with finite arc complexes are convex polygons $\poly m$, once-punctured polygons $\punc m$, annulus with one marked point in one boundary component and $m$ marked points on the other boundary component. Note that a once-punctured convex polygon is an annulus with marked points only on one boundary component. In \cite{palesi}, Dupont and Palesi introduce the \emph{quasi} arc complex of a non-orientable surface where they include any one-sided curve in the 0-skeleton. The only non-orientable surface with finite (quasi) arc complex is a Möbius strip with marked points on its boundary. Wilson proved the sphericity of this complex in \cite{wilson}, using shellability. In order to prove the shellability, he first gave shelling orders of the arc complexes of a convex polygon and a once-punctured polygon. 
\begin{thm}[Wilson, Proposition 2.11, Claim 1.4]\label{ac}
	The arc complex $\ac$ of a convex polygon $\poly m$ ($m\geq 4$) is a $(m-4)$-pseudo-manifold with boundary and is shellable.
\end{thm}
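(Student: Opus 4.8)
The plan is to check the three conditions defining a $(m-4)$-pseudo-manifold with boundary and then to build a shelling order by induction on $m$, exploiting the way a triangulation decomposes along a fixed boundary edge. The pseudo-manifold part is the easy half: the maximal simplices of $\ac$ are the triangulations of $\poly m$, and a triangulation of an $m$-gon uses exactly $m-3$ diagonals, so $\ac$ is pure of dimension $m-4$. A codimension-one face is a set $F$ of $m-4$ pairwise disjoint diagonals; these cut $\poly m$ into $m-3$ regions, and a one-line side-count shows that exactly one of these regions is a quadrilateral and the rest are triangles, so $F$ extends to precisely two triangulations. Hence every codimension-one face lies in at most (indeed exactly) two maximal faces, and the boundary of $\ac$ is empty. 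I would not treat strong connectivity separately: once the shelling below is in hand it is automatic, since in any shellable pure complex every maximal face other than the first shares a codimension-one face with an earlier one, so the dual graph is connected.

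For the shelling, suppose inductively that the arc complexes of polygons with fewer than $m$ vertices are shellable (the base case $m=4$ being the two-point $0$-sphere), and fix the boundary edge $e=[1,m]$ of $\poly m$. In any triangulation, $e$ lies in a unique triangle $\{1,k,m\}$ with $2\le k\le m-1$, and the apex $k$ sorts the maximal faces of $\ac$ into blocks $\mathcal{B}_2,\dots,\mathcal{B}_{m-1}$. The subcomplex $\overline{\mathcal{B}_k}$ generated by $\mathcal{B}_k$ is the join
\[
\{[1,k]\}\Join\{[k,m]\}\Join\ac[k]\Join\ac[m-k+1],
\]
whose first two factors are single vertices, present only when $[1,k]$ resp.\ $[k,m]$ is a genuine diagonal, and whose last two factors are the arc complexes of the sub-polygons on $\{1,\dots,k\}$ and $\{k,\dots,m\}$; these sub-polygons have fewer than $m$ vertices, so their arc complexes are shellable by the inductive hypothesis (a sub-polygon degenerating to a bigon or a triangle simply drops out of the join). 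By \lemref{join} each $\overline{\mathcal{B}_k}$ is therefore shellable; I fix a shelling $\mathcal{S}_k$ of it and claim that the concatenation $\mathcal{S}_2,\mathcal{S}_3,\dots,\mathcal{S}_{m-1}$ is a shelling of $\ac$. By \lemref{equiv} one could equivalently verify Property~\eqref{wilson}, but checking the shelling condition directly is cleaner here.

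The verification rests on two facts about a maximal face $C\in\mathcal{B}_k$, where $C'$ denotes the codimension-one face $C\setminus\{[1,k]\}$. First, if a face $G\subseteq C$ also lies in $\overline{\mathcal{B}_{k'}}$ for some $k'<k$, then $[1,k]\notin G$, so $G\subseteq C'$: otherwise $G$ would extend to a triangulation containing both $[1,k]$ and the diagonal $[k',m]$ of the triangle $\{1,k',m\}$, and these two diagonals cross because $2\le k'\le k-1<m$. Second, flipping $[1,k]$ inside $C$ replaces it by a diagonal $[a,m]$ with $a<k$, so the resulting triangulation lies in an earlier block $\mathcal{B}_a$ and already contains $C'$. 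Combining the two, when $C$ occurs at position $t$ of the concatenation the intersection $C\cap\bigcup_{j<t}C_j$ equals $W\cup C'$, where $W$ is the contribution of the earlier maximal faces of $C$'s own block: by the shelling $\mathcal{S}_k$, $W$ is pure of dimension $m-5$ (empty precisely when $C$ opens its block, in which case only $C'$ remains), while $C'$ is an $(m-5)$-simplex, so $W\cup C'$ is pure of dimension $m-5$, exactly as a shelling demands. Thus $\ac$ is shellable, and together with the first paragraph it is a shellable $(m-4)$-pseudo-manifold with boundary.

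I expect the main obstacle to be precisely the asymmetry used in the previous paragraph. One must order the blocks by \emph{increasing} apex, so that the codimension-one face of a maximal face $C\in\mathcal{B}_k$ visible from the earlier blocks is always $C\setminus\{[1,k]\}$ and never $C\setminus\{[k,m]\}$, whose flip points to a \emph{later} block; and one then has to argue, via the crossing obstruction, that no face of $C$ still containing $[1,k]$ can have appeared in an earlier block. Making these two facts cooperate, and confirming that nothing else enters the intersection, is the real content of the argument; everything around it is bookkeeping.
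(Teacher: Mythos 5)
Your argument is correct, and it takes essentially the same approach as the paper's own proof of the bicoloured generalization, Theorem~\ref{propcac}: fix a boundary edge, sort triangulations into blocks by the apex of the triangle on that edge, realise each block's closure as a join of smaller arc complexes so that \lemref{join} applies, and verify that the concatenation of block shellings is a shelling. The paper itself cites this statement from Wilson rather than proving it, and your only deviations from the paper's template (blocks ordered by increasing rather than decreasing apex, checking the shelling condition directly rather than via Property~\ref{wilson}, which is equivalent by \lemref{equiv}) are cosmetic.
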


\begin{thm}[Wilson, Proposition 2.11, 3.13]\label{acp}
	The arc complex $\acp$ of a punctured polygon $\punc m$ ($m\geq 2$) is a $(m-2)$-pseudo-manifold with boundary and is shellable.
\end{thm}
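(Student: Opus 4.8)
The plan is to prove the two assertions of the theorem in turn: that $\acp$ is a $(m-2)$-pseudo-manifold with boundary, and that it is shellable. For the pseudo-manifold part I would verify the three defining properties. \emph{Purity}: as recalled after Definition~\ref{defiarc}, every maximal arc system of $\punc m$ cuts the surface into triangles and exactly one once-punctured monogon; regarding the puncture as an interior marked point and applying Euler's formula to the resulting cell decomposition shows that any such triangulation uses exactly $m-1$ arcs (the maximal diagonal bounding the once-punctured monogon counts as an arc, the radius joining its vertex to the puncture does not), so $\acp$ is pure of dimension $m-2$. \emph{Strong connectedness}: any two triangulations of $\punc m$ are related by a sequence of flips, which I would obtain by induction on $m$, flipping through an ear disjoint from the once-punctured monogon when one is available and reducing to the once-punctured triangle or bigon by hand otherwise.

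\emph{The codimension-one condition}: if $\sigma$ is an arc system with $m-2$ arcs, all of its complementary regions but one are elementary (a triangle or a once-punctured monogon), and the single exceptional region is either a quadrilateral or a once-punctured bigon. In the first case $\sigma$ extends to a triangulation in exactly the two ways obtained from the two diagonals of the quadrilateral; in the second, the only essential arc one can add inside a once-punctured bigon is a loop around the puncture based at one of its two vertices, again giving exactly two completions — and if $\sigma$ arises by deleting a maximal diagonal from a triangulation, the loop was an edge of a triangle with two corners at a common vertex, so the uncovered region is indeed a once-punctured bigon. Hence every $(m-3)$-simplex lies in at most two maximal simplices.

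For shellability, by Lemma~\ref{equiv} it suffices to enumerate the triangulations so as to satisfy Wilson's Property~\ref{wilson}. Fix a boundary edge $e=uv$. Every triangulation contains a unique triangle incident to $e$, whose third vertex $w$ runs over all vertices of the polygon; grouping triangulations by $w$ partitions the maximal simplices, and deleting the triangle incident to $e$ (together with the once-punctured monogon it abuts, in the two cases $w\in\{u,v\}$ in which an edge of that triangle is a maximal diagonal) presents each group as a join, in the sense of Lemma~\ref{join}, of a simplex or of the arc complex of a smaller convex polygon with the arc complex of a smaller, possibly once-punctured, polygon. By induction on $m$, together with Lemma~\ref{join} and Theorem~\ref{ac}, every group is shellable; I would order the groups by the position of $w$, list the fan triangulation based at $u$ first, concatenate the group shellings, and then verify Property~\ref{wilson} for $C_j,C_k$ with $j<k$: immediately when the two lie in the same group, and otherwise using that the diagonal separating off the triangle incident to $e$ in the earlier group confines $C_j\cap C_k$ to a single join factor, so that the required $C_i$ can be located inside that factor.

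The hard part will be this last verification, and within it the bookkeeping imposed by the puncture. In contrast to a convex polygon, every triangulation of $\punc m$ carries a distinguished once-punctured monogon whose bounding maximal diagonal does not behave like an ordinary diagonal under flips, so the recursion along $e$ must keep track of where the puncture sits and of whether the triangle incident to $e$ is one of the two special types. Producing a single linear order on all triangulations that satisfies Property~\ref{wilson} uniformly across these strata — rather than only the much weaker ``at most two facets'' bound that suffices for the pseudo-manifold part — is where the real effort goes; getting the base cases $m=2,3$ and the interface between the polygonal and the once-punctured factors to cohere is the part I would expect to be most delicate.
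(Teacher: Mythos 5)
This theorem is cited from Wilson (Propositions~2.11 and~3.13), and the paper gives no proof of it; the closest the paper comes is its proof of the bicoloured analogue, Theorem~\ref{shellp}, which --- presumably tracking Wilson --- shells $\acp$ by grouping triangulations according to their unique maximal diagonal (the loop bounding the once-punctured monogon). Cutting along that loop turns the star of each group into $\ac[m+1]$ in one stroke, so the whole shelling reduces to a single invocation of Theorem~\ref{ac}, plus a one-line check of Property~\ref{wilson} across groups (the maximal diagonals of two triangulations can always be swapped by one flip). Your pseudo-manifold analysis (purity via a count of arcs, quadrilateral versus once-punctured bigon for the codimension-one condition) is correct and essentially the standard one.

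Your shelling strategy, however, is genuinely different from that decomposition and contains a gap. You port the convex-polygon idea of grouping triangulations by the triangle incident to a fixed boundary edge $e=uv$, indexing groups by the third vertex $w$. In a once-punctured polygon $w$ does \emph{not} determine that triangle: arcs joining two marked points of $\punc m$ come in several isotopy classes, distinguished by which side of them the puncture lies, so for fixed $w$ there can be several inequivalent triangles $uvw$ and hence several pivot simplices. Already in $\punc 3$ with $e=12$ and $w=3$, some triangulations in the ``$w=3$'' group have the diagonal from $1$ to $3$ as the non-boundary edge of the $e$-triangle while others have the diagonal from $2$ to $3$; the group is a union of two distinct stars, not one star, so it is not a join and Lemma~\ref{join} does not apply to it as you claim. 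To repair this you would have to group by the full triangle rather than by $w$ alone, producing many more groups (roughly two per $w$, governed by which side of the triangle the puncture lies, plus the loop cases $w\in\{u,v\}$), and then verify Property~\ref{wilson} across this finer family --- exactly the bookkeeping that the maximal-diagonal decomposition avoids, since each triangulation carries a single maximal diagonal and cutting along it lands you immediately in the already-settled convex case $\ac[m+1]$.
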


\section{Main theorems}\label{sectionmain}
\subsection{Bicoloured convex polygon}
In this section we prove that the subcomplex $\cac \subset \ac$ is PL-homeomorphic to  a closed ball. Firstly we show that this subcomplex is a pseudo-manifold with boundary when the bicolouring is non-trivial.

\begin{lem}\label{pure}
For $m\geq 4$ and any non-trivial bicolouring of $\poly m$, the subcomplex $\cac$ of $\ac$ is pure of dimension $m-4$.
\end{lem}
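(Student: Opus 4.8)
The plan is to show that every permissible simplex of $\cac$ is contained in a permissible triangulation, i.e.\ a maximal simplex of $\cac$ that uses $m-3$ diagonals, since this is exactly what purity of dimension $m-4$ means. First I would fix the bicolouring and observe that, since it is non-trivial, there is at least one blue vertex; in fact there are at least two blue vertices (if there were only one blue vertex the colouring would be trivial of type~\eqref{full1}, and if there were only two blue vertices that were consecutive the complement would be trivial of type~\eqref{full2} after swapping colours — I would need to check the exact bookkeeping here, but the point is that a non-trivial bicolouring guarantees enough blue vertices to work with). The key structural fact I would isolate is: \emph{in any polygon, if $v$ is a blue vertex then the fan triangulation based at $v$ consists entirely of $B$–$R$ and $B$–$B$ diagonals}, hence is permissible. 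This already shows $\cac$ is non-empty and contains a maximal simplex of $\ac$, so its dimension is $m-4$.

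The substance is the \emph{purity}: that there is no ``stuck'' permissible simplex of smaller dimension that cannot be completed. I would argue as follows. Let $\sigma$ be any permissible simplex, given by a collection of pairwise-disjoint permitted diagonals; these cut $\poly m$ into sub-polygons $P_1,\dots,P_r$, each inheriting a bicolouring from $\poly m$. To extend $\sigma$ to a permissible triangulation it suffices to triangulate each $P_i$ using only permitted diagonals of $P_i$. Now each $P_i$ has at least one blue vertex: indeed every vertex of $P_i$ is either a vertex of $\poly m$ or an endpoint of one of the diagonals of $\sigma$, and since all diagonals in $\sigma$ are permitted (hence have at least one blue endpoint), if $P_i$ had no blue vertex then all of its boundary would consist of $R$–$R$ edges — but a diagonal of $\sigma$ bounding $P_i$ with both endpoints red contradicts permissibility, and an original polygon edge being forced... here I would simply note that each diagonal of $\sigma$ on the boundary of $P_i$ contributes a blue endpoint to $P_i$ unless $\sigma$ is empty, and if $\sigma$ is empty we already handled $P_1=\poly m$ above. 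So every $P_i$ has a blue vertex $v_i$, and the fan triangulation of $P_i$ at $v_i$ is permissible by the structural fact. Assembling these fans with $\sigma$ yields a permissible triangulation containing $\sigma$, of dimension $m-4$.

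I expect the main obstacle to be the careful verification that \emph{every} sub-polygon $P_i$ cut out by a permissible simplex genuinely contains a blue vertex — the subtlety is a sub-polygon all of whose boundary arcs are original red–red edges of $\poly m$, which can only occur if $\sigma$ itself is empty or if such a red-monochromatic sub-polygon is possible; one must use non-triviality of the bicolouring to rule out a red-monochromatic region appearing, or else handle it directly. A clean way around this is to pick, once and for all, a single blue vertex $b$ of $\poly m$; then for an arbitrary permissible $\sigma$, the diagonals of $\sigma$ together with all diagonals from $b$ that are disjoint from $\sigma$ generate a permissible simplex, and I would argue this ``$b$-completion'' is always a full triangulation by an induction on $m$ (cutting along a diagonal of $\sigma$ incident to $b$, or along one not separating $b$, reduces to smaller bicoloured polygons that still have $b$ as a blue vertex, and to which the inductive hypothesis applies). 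The base case $m=4$ is immediate by inspecting the finitely many bicolourings. This inductive formulation also sidesteps the need to classify trivial bicolourings precisely, since only the existence of the blue vertex $b$ is used.
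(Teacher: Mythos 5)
Your proof is correct and follows essentially the same approach as the paper: decompose $\poly m$ along the diagonals of $\sigma$, note that each sub-polygon inherits a blue vertex because the bounding diagonals of $\sigma$ are permitted, and fill each sub-polygon with a fan triangulation based at that blue vertex. The worry you raise about red-monochromatic sub-polygons is indeed a non-issue for exactly the reason you give — every proper sub-polygon is bounded by at least one diagonal of $\sigma$, hence has a blue vertex — so the inductive ``$b$-completion'' variant is not needed.
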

\begin{proof}
Let $\sigma$ be a simplex of $\cac$ of dimension $k\geq 1$. The diagonals corresponding to the 0-simplices of $\sigma$ decompose the polygon $\poly m$ into $k+2$ regions, some of which are untriangulated polygons with at least 4 vertices. Since these diagonals are permitted, every smaller untriangulated polygon has a blue vertex. We triangulate each one of them with a fan triangulation based at one such blue vertex. Since the dimension of the maximal simplex in the uncoloured polygon is $m-4$, we get that any maximal simplex containing $\sigma$ is of dimension $m-4$. 
\end{proof}
Next, we show that the subcomplex is strongly connected. 
\begin{figure}[!ht]
	\centering
	\includegraphics[width=12cm]{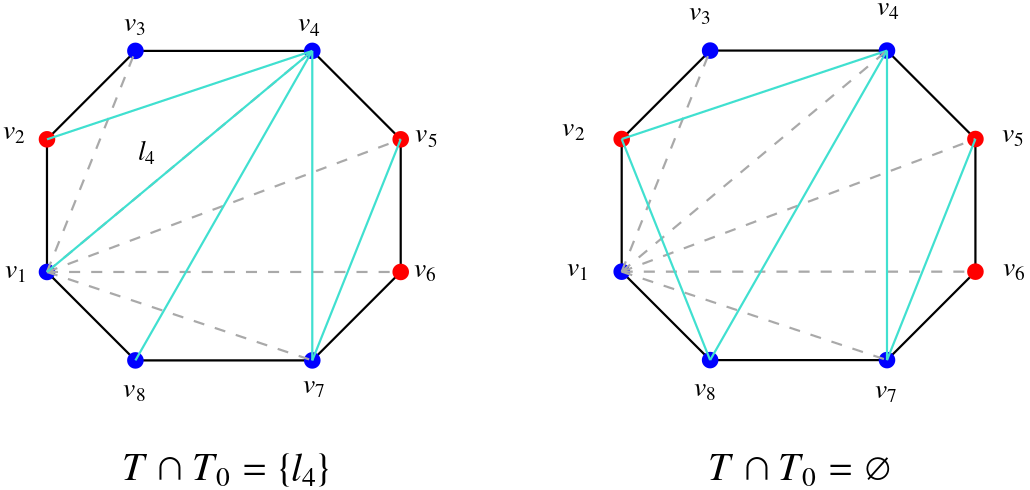}
	\caption{}
	\label{strongly}
\end{figure}
\begin{lem}\label{strong}
For $m\geq 4$ and any non-trivial bicolouring, the subcomplex $\cac$ is strongly connected. 
\end{lem}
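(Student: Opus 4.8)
The plan is to prove that the dual graph of $\cac$ is connected by joining every maximal simplex to a fixed reference triangulation. By Lemma~\ref{pure} the maximal simplices of $\cac$ are exactly the triangulations of $\poly m$ all of whose $m-3$ diagonals are permitted, i.e.\ the \emph{permissible} triangulations, and two of them are adjacent in the dual graph precisely when they differ by a single diagonal, that is, by a flip. Since every bicolouring has a blue vertex, fix one, say $v$, and let $F_v$ denote the fan triangulation of $\poly m$ based at $v$; by the Remark that any fan triangulation based at a blue vertex is permissible, $F_v$ is a vertex of the dual graph of $\cac$. It therefore suffices to connect an arbitrary permissible triangulation $T$ to $F_v$ through a sequence of permissible flips, and then any two permissible triangulations are joined via $F_v$.

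To connect $T$ to $F_v$ I would induct on $\delta(T)$, the number of diagonals of $T$ not incident to $v$. If $\delta(T)=0$ then $T=F_v$. If $\delta(T)>0$, I claim there is a permissible flip producing a triangulation $T'$ with $\delta(T')=\delta(T)-1$. This is the classical polygon flip argument, made to track $v$: the diagonals of $T$ incident to $v$ together with the two boundary edges at $v$ cut $\poly m$ into subpolygons $Q_0,\dots,Q_s$, and since $\delta(T)>0$ some $Q_i$ has at least four vertices. Inside such a $Q_i$ take the triangle $[v,a,b]$ of $T$ incident to a spoke $[v,a]$ bounding $Q_i$; because $Q_i$ is not a triangle, its opposite side $[a,b]$ is a genuine chord of $\poly m$ not incident to $v$, and the triangle of $T$ on the far side of $[a,b]$, say $[a,b,c]$, again lies in $Q_i$. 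Flipping $[a,b]$ in the quadrilateral $v a c b$ replaces $[a,b]$ by the spoke $[v,c]$; all other diagonals are unchanged, so $\delta$ drops by exactly one. The only new diagonal is $[v,c]$, and since $v$ is blue this is a $B-B$ or $B-R$ diagonal, hence permitted, while every remaining diagonal of $T'$ already belonged to the permissible triangulation $T$. Thus $T'$ is permissible and the induction closes: $T$ reaches $F_v$ in at most $m-3$ permissible flips. Concatenating, $\cac$ is strongly connected.

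I expect the only delicate point to be the bookkeeping in the classical flip step: verifying that the chosen $[a,b]$ is genuinely a diagonal and not a boundary edge, and that its flip yields a spoke at $v$ in every configuration — including the degenerate cases where $v$ carries no diagonal of $T$ at all, or where $Q_i$ is the outermost subpolygon containing a boundary edge at $v$. These cases are routine once one selects $[a,b]$ from an ear of the triangulation induced on $Q_i$, but they have to be checked to make the decrease of $\delta(T)$ fully rigorous. Note that the argument uses nothing about the bicolouring beyond the existence of a blue vertex, so strong connectivity in fact holds for every bicolouring; we state it in the non-trivial case only because that is what is needed for the pseudo-manifold conclusion.
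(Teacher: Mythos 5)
Your proof is correct, and it takes a genuinely different route from the paper's. The paper inducts on the number $m$ of polygon vertices: pick a blue vertex $v_1$ with fan triangulation $T_0$, and either find a diagonal of $T$ shared with $T_0$ and split the polygon along it (invoking the induction hypothesis on each piece, or falling back on classical associahedron connectivity when a piece inherits a trivial bicolouring), or — when $T$ has no diagonal at $v_1$ — perform one flip to create such a diagonal and reduce to the previous case. You instead fix the blue vertex $v$ and induct on $\delta(T)$, the number of diagonals of $T$ not meeting $v$, reducing $\delta$ by one at each step with a single flip that is permissible because its new diagonal emanates from $v$. Your argument buys directness and uniformity: there is no nested induction on subpolygons, no case split on whether a subpolygon inherits a trivial colouring, and — as you note — the argument only uses that some vertex is blue, so it establishes strong connectivity for all bicolourings at once, whereas the paper's proof has to treat the trivial subcase as an exception. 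The one thing to make fully explicit, as you already flag, is that the triangle of $T$ at $v$ inside a non-triangular region $Q_i$ necessarily has $[a,b]$ as an interior diagonal of $Q_i$ (because $v$ has no further spokes inside $Q_i$), and hence the second triangle $[a,b,c]$ also lies in $Q_i$; once that is said, the count $\delta(T')=\delta(T)-1$ is immediate.
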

\begin{proof}
We will prove that $T$ is connected by flips to the fan triangulation based at any blue vertex, by induction on $m$. For $m=4$, there are two non-trivial bicolourings possible: either alternating $R,B$ vertices or three $R$ vertices. In each case the subcomplex $\cac$ is a single 0-simplex which is also the fan triangulation based simultaneously at the two blue vertices. Suppose that for any bicolouring of polygons with $1,\ldots, m-1$ vertices, any triangulation $T$ is connected by flips to a fan triangulation. Now, consider the polygon $\poly m$. We name $v_1$ any blue vertex and we enumerate the rest of the vertices in the clockwise direction. Let $T_0$ be the fan triangulation based at $v_1$. 

Suppose that $l_i\in T\cap T_0$ for some $i=3,\ldots , m-2$. See the left panel of Fig.\eqref{strong}. It divides the polygon $\poly m$ into two smaller polygons $\poly {i}, \poly{m-i+1}$ whose vertices are $v_1,v_2,\ldots, v_i$ and $v_i, v_{i+1}, \ldots, v_1$, respectively. The triangulation $T$ triangulates these two polygons. It is possible that that one of these smaller polygons have a trivial coloring. In this case, we use fact that the dual of the full arc complex (the associahedron) is connected. In particular, the restriction of $T$ to this smaller polygon is connected by flips to the fan triangulation based at a blue vertex. If both the smaller polygons have a non-trivial bicolouring, we use our induction hypothesis inside each of them to connect $T$ by flips to the fan triangulation based at $v_1$. So $T$, as a triangulation of $\poly m$, is connected by flips to the triangulation $T_0$. 

Now we suppose that there are no diagonals in $T$ that are incident at $v_1$ and since $T$ is a permissible triangulation, the vertex $v_1$ is contained in a unique triangle with vertices at $v_2, v_m$ such that at least one is blue. The diagonal joining $v_2,v_m$ is the edge of exactly one other triangle, whose third vertex is say $v_j\neq v_1$. Let $T'$ be the triangulation obtained by flipping the diagonal joining $v_2,v_m$ with the permissible diagonal $l_j$ joining $v_1, v_j$. Now we are in the first case. This concludes the induction step.

\end{proof}

Next we show that 
\begin{lem}\label{mfd}
	Every codimension one simplex of the subcomplex $\cac$ is contained in at most two maximal simplices of $\cac$. 
\end{lem}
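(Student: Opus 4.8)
The plan is to deduce the statement directly from the fact, already recorded in \thmref{ac}, that $\ac$ is itself an $(m-4)$-pseudo-manifold with boundary. First I would invoke \lemref{pure}: the subcomplex $\cac$ is pure of dimension $m-4$, the same as $\ac$. Hence a codimension one simplex $\sigma$ of $\cac$ has dimension $m-5$ and is in particular a codimension one simplex of $\ac$, while every maximal simplex of $\cac$ containing $\sigma$ has dimension $m-4$ and is therefore maximal in $\ac$ as well (a simplex of the subcomplex of top dimension in $\ac$ is maximal in $\ac$). Consequently the maximal simplices of $\cac$ through $\sigma$ form a subset of the maximal simplices of $\ac$ through $\sigma$, and the latter has at most two elements because $\ac$ is a pseudo-manifold with boundary.

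It is worth spelling the argument out geometrically, since the same picture is convenient when one later wants to describe the boundary of $\cac$. The $m-4$ pairwise disjoint permitted diagonals forming $\sigma$ cut $\poly m$ into $m-4$ triangles together with a single quadrilateral $Q$; write $a,b,c,d$ for its vertices in cyclic order. A triangulation of $\poly m$ refining $\sigma$ is obtained precisely by inserting one of the two diagonals $ac$ or $bd$ of $Q$, so there are at most two such triangulations; and the maximal simplices of $\cac$ containing $\sigma$ are exactly those among $\{ac,bd\}$ that are permitted. In particular there are at most two of them.

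I do not expect a genuine obstacle here; the property is essentially inherited from $\ac$. The only point needing a word of care is that codimension and maximality must be measured consistently inside $\cac$ and inside $\ac$, which is exactly what \lemref{pure} provides. If one also wishes to record that $\sigma$ lies in \emph{at least} one maximal simplex of $\cac$ (so that $\cac$ does not quietly shed codimension one faces), one notes that the boundary of $Q$ contains at least one diagonal of $\sigma$, which is permitted and hence has a blue endpoint; thus at least one of $ac, bd$ has a blue endpoint and is permitted. This is the same observation already used in the proof of \lemref{pure}.
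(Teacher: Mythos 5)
Your proposal is correct and follows essentially the same route as the paper: both deduce the bound from the fact that $\ac$ is a pseudo-manifold, so a codimension-one face of $\cac$ is also codimension one in $\ac$ and hence lies in at most two maximal simplices there, with $\cac$ inheriting the bound as a subcomplex of full dimension by \lemref{pure}. The paper additionally carries out a short colour-analysis of the two flip diagonals in the quadrilateral $Q$ to identify exactly when the face is a boundary face of $\cac$; your geometric paragraph and the remark that at least one of $ac,bd$ is permitted capture the same content and match the structure of the paper's argument.
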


\begin{proof}
	Let $\sigma$ be a $d-1$-simplex, where $d$ is the dimension of $\cac$. Since the complex $\ac$ of an uncoloured polygon is a pseudo-manifold, we have that $\sigma$ is contained in exactly two $d$-simplices of $\ac$. These two simplices represent two triangulations which have all but one diagonal in common. These two diagonals can be either $R-B$ and $B-B$ or both $B-B$. Indeed, if both were $R-R$ diagonals or both were $R-B$ diagonals, then there would be a $R-R$ diagonal inside $\sigma$ bounding the quadrilateral region where the two intersecting diagonals lie. This is impossible because $\sigma\subset \cac$. When both the diagonals are of type $B-B$, then both the maximal simplices lie inside $\cac$. Otherwise only the maximal simplex generated by $R-B, B-B$ diagonals lies in $\cac$. In this case, $\sigma$ is a boundary simplex of $\cac$.
	\end{proof}
Using Lemmas \eqref{pure}-\eqref{mfd}, we get that
\begin{thm}\label{pseudo}
The subcomplex $\cac$ is a $d$-pseudo-manifold with boundary.
\end{thm}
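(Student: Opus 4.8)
The statement to establish bundles together three structural properties: being pure of dimension $d$, being strongly connected, and having every codimension-one face in at most two top-dimensional faces. The plan is simply to assemble the preceding three lemmas, but with one preliminary case distinction that is forced by the hypotheses of those lemmas: Lemmas \ref{pure}, \ref{strong} and \ref{mfd} are all stated for \emph{non-trivial} bicolourings, whereas the theorem asserts the conclusion for \emph{any} bicolouring. So the first thing I would do is split into the trivial and non-trivial cases.

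For a trivial bicolouring, I would invoke the Remark observing that there are no rejected diagonals, so $\cac$ coincides with the full arc complex $\ac$. By Wilson's Theorem \ref{ac}, $\ac$ is already an $(m-4)$-pseudo-manifold with boundary, so there is nothing further to prove in this case. This disposes of the degenerate situations (one red vertex, or two consecutive red vertices) cleanly.

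For a non-trivial bicolouring, I would unwind the definition of a $d$-pseudo-manifold with boundary recalled in Section \ref{setup}: it is a pure, strongly connected $d$-simplicial complex in which every $(d-1)$-simplex lies in at most two $d$-simplices. Lemma \ref{pure} gives purity, and identifies the dimension as $d = m-4$; Lemma \ref{strong} gives strong connectedness of $\cac$ (its dual graph is connected, via the flip-connectivity to a fan triangulation at a blue vertex); and Lemma \ref{mfd} establishes that each codimension-one face of $\cac$ sits in at most two maximal simplices. Combining these three facts with the dimension count $d = m-4$ yields the claim, and also tells us that the boundary of $\cac$ consists precisely of those $(m-5)$-simplices lying in a single maximal simplex — exactly the ones singled out at the end of the proof of Lemma \ref{mfd}, where the two competing diagonals are of types $R\text{-}B$ and $B\text{-}B$.

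I do not expect a genuine obstacle here: the theorem is a formal consequence of the lemmas once the trivial/non-trivial dichotomy is made explicit. The only point needing a moment's care is the consistency of the dimension $d$ across the two cases — in both, $\cac$ has dimension $m-4$, so the statement "$d$-pseudo-manifold with boundary" is unambiguous — and making sure the trivial case is not silently omitted, since it is the reason the theorem can be phrased for an arbitrary bicolouring rather than only a non-trivial one.
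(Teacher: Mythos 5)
Your proposal is correct and takes essentially the same route as the paper, which states Theorem~\ref{pseudo} as an immediate consequence of Lemmas~\ref{pure}, \ref{strong} and \ref{mfd}. The one thing you add — explicitly handling the trivial bicolouring by noting $\cac=\ac$ and invoking Wilson's Theorem~\ref{ac} — is a genuine point of care that the paper leaves implicit, and it correctly reconciles the lemmas' "non-trivial bicolouring" hypotheses with the broader phrasing of the theorem.
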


As the last step, we prove that the subcomplex is shellable.
\begin{thm}\label{propcac}
	For $m\geq 4$, the simplicial complex $\cac$ satisfies Property \eqref{wilson}.
\end{thm}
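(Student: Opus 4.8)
The plan is to establish Property~\eqref{wilson} by exhibiting an explicit enumeration of the permissible triangulations of $\poly m$ and then verifying conditions (i) and (ii) by induction on $m$, mimicking Wilson's argument for the full arc complex in \thmref{ac} but keeping careful track of which diagonals are permitted. First I would fix a blue vertex, call it $v_1$, and enumerate the remaining vertices $v_2,\ldots,v_m$ clockwise. The natural ordering of the maximal simplices is a recursive one: among all permissible triangulations, list first those containing the diagonal $l_3 = v_1v_3$ if it is permitted, then those containing $v_1 v_4$ but not $v_1v_3$, and so on; within each such block, recurse on the smaller polygons cut off by that diagonal, each of which inherits a bicolouring from $\poly m$. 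The very first triangulation in this order is the fan at $v_1$, which is permissible by the first \emph{Remark} after the definition of permissible simplices. The subtlety absent from Wilson's setting is that some of the "first" diagonals $v_1 v_i$ may be rejected (if $v_i$ is red); those blocks are simply skipped, and one must check this skipping does not destroy the shelling property.

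The key steps, in order, would be: (1) make precise the recursive enumeration above, noting that a diagonal $v_1 v_i$ based at the blue vertex $v_1$ is permitted iff $v_i$ is blue, so the non-skipped blocks are indexed by the blue vertices among $v_3,\ldots,v_{m-1}$ together with the "no diagonal at $v_1$" case handled last; (2) observe that when a block is non-empty, the smaller polygons $\poly i$ and $\poly{m-i+1}$ cut off by $v_1 v_i$ each contain $v_1$ and $v_i$ (both blue), hence each has its own non-trivial-or-trivial bicolouring and a well-defined permissible subcomplex, so the induction hypothesis (or \thmref{ac} in the trivial case, via \lemref{join} to glue the two shellings across the common diagonal $v_1v_i$) applies; (3) for the verification of Property~\eqref{wilson}, given $j<k$, take two permissible triangulations $C_j, C_k$ and locate the first diagonal at $v_1$ on which they differ, or more precisely run Wilson's "flip toward the fan" argument from \lemref{strong}: the triangulation $C_k$ is connected by permissible flips to the fan, and the flip that increases agreement with $C_j$ produces the required intermediate $C_i$ with $C_i \cap C_k$ of codimension one and $C_j \cap C_k \subset C_i \cap C_k$; (4) check that every flip used stays inside $\cac$, which is exactly the content of \lemref{mfd} — a codimension-one permissible simplex lies in at most two permissible maximal simplices, and the "other" one is obtained by a permissible flip precisely when the new diagonal is $B$–$B$ or $R$–$B$.

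The main obstacle I expect is step~(3)–(4): ensuring that the intermediate simplex $C_i$ demanded by Property~\eqref{wilson} can always be chosen \emph{permissible}. In the uncoloured case one always has a flip available; here, flipping a permissible diagonal might a priori force a rejected $R$–$R$ diagonal, and one must argue this never happens when the flip is chosen to move toward a fixed permissible target triangulation $C_j$ (or toward the fan at $v_1$). The resolution should be that the quadrilateral in which the flip occurs has $v_1$ — or some other blue vertex — as one of its four corners whenever we are flipping toward a fan at a blue vertex, so the new diagonal is incident to a blue vertex and hence permitted; making this corner-tracking precise, and handling the base case $m=4$ and the "no diagonal at $v_1$" case (where one first does the single flip described at the end of \lemref{strong} to reduce to a block with a diagonal at $v_1$), is where the real work lies. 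Once Property~\eqref{wilson} is established, \lemref{equiv} gives shellability of $\cac$, \thmref{pseudo} gives that it is a $(m-4)$-pseudo-manifold with boundary, and \thmref{topo} then yields that $\cac$ is a $PL$-ball of dimension $m-4$.
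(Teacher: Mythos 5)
Your overall strategy is the same as the paper's: induct on $m$, decompose the set of permissible triangulations into blocks according to the "extreme" diagonal at a chosen blue vertex $v_1$, use Lemmas~\ref{equiv} and~\ref{join} to shell each block, concatenate, and then verify Property~\ref{wilson} across blocks by a flip. Up to reversing the orientation convention, your "first diagonal at $v_1$" block coincides with the paper's $\str{\sigma_k}$ where $\sigma_k=\{(1,k),(k,m)\}$.

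However, the central new claim you add to Wilson's scheme is false: you assert that "$v_1v_i$ is permitted iff $v_i$ is blue," and therefore that some blocks must be skipped because their defining diagonal is rejected. Since $v_1$ is blue, \emph{every} diagonal $v_1v_i$ is either $B$--$B$ or $R$--$B$, hence \emph{always} permitted; none of them is ever rejected. What actually governs which blocks are nonempty is the \emph{other} diagonal forced by the block: if $T$ has $v_1v_i$ as its first diagonal at $v_1$, then the triangle at $v_1$ forces the diagonal $v_2v_i$ (or, in the paper's orientation, $(k,m)$) to lie in $T$, and \emph{that} diagonal can be $R$--$R$. This is why the paper carefully chooses the labelling via a red--blue edge so that vertex~$1$ is blue and vertex~$m$ is red: then $(k,m)\in T$ permissible forces $k$ blue, so the blocks $\str{\sigma_k}$ are genuinely indexed by blue $k$'s. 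Your unconstrained choice of $v_1$ does not yield this, and your stated criterion for skipping blocks is both misdiagnosed and, without fixing $v_2$ (resp.\ $v_m$) to be red, simply incorrect.

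You also explicitly defer the verification of Property~\ref{wilson}(i)--(ii) for $S$ and $T$ in different blocks as "where the real work lies," so the proposal is a plan rather than a proof at exactly the point that needs an argument. The paper's resolution is concrete: with $S\in\str{\sigma_j}$, $T\in\str{\sigma_k}$, $j>k$, the diagonal $(k,m)$ lies in $T$ but crosses $(1,j)\in S$, so $(k,m)\notin S\cap T$; flipping $(k,m)$ in $T$ yields a diagonal emanating from the blue vertex~$1$, which is automatically permitted, and produces a maximal simplex $P$ in an earlier block with $P\cap T$ of codimension one and $S\cap T\subset P\cap T$. Your "flip toward the fan" gestures at this but neither pins down which diagonal is flipped, nor verifies that the new diagonal is permitted, nor that $P$ precedes $T$ in the ordering — precisely the checks that constitute the theorem.
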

\begin{proof}
	We use induction on the number of vertices. For $m=4$, there are two possible colourings. In both the cases, the complex is a single 0-simplex which satisfies the property trivially. Let us now suppose that the statement is true for $4,\ldots, m-1$. 
	Due to the bicolouring, there is a edge of the polygon joining one red (the right one, say) and one blue vertex (left one). Enumerate the vertices in the anticlockwise direction starting from the blue vertex so that the red vertex is numbered $m$. See Fig.\ref{shell1}.
	\begin{figure}
			\centering
		\includegraphics[width=5cm]{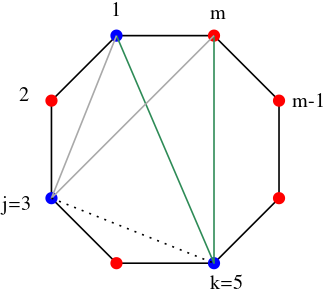}
		\caption{}
		\label{shell1}
	\end{figure}
	For $i=3,\ldots, m-2$, let $\sigma_i$ be the simplex of $\cac$ generated by the two diagonals joining the vertex $i$ to the vertices 1 and $m$. In Fig. \eqref{shell1}, the diagonals of $\sigma_3$ are drawn in grey and those of $\sigma_5$ are in greem. If the vertex $2$ is a blue vertex, then let $\sigma_2$ be the 0-simplices corresponding to the diagonal joining $(2,m)$. Let $\sigma_{m-1}$ be the 0-simplices corresponding to the diagonals joining $(1,m-1)$, which is either $R-B$ or $B-B$ because the vertex 1 is a blue vertex.  For each $i$, the simplex $\sigma_i$ decomposes the polygon $\poly m$ into an $i$-gon, an $(m-i+1)$-gon and a triangle with vertices at $1,m,i$. So the star of the simplex $\sigma_i$, $\str{\sigma_i}$, in the complex $\cac$ is the join of $\cac[i]$ and $\cac[m-i+2]$. 
	
	From the induction hypothesis, there exist enumerations of the maximal simplices of  $\cac[i]$ and $\cac[m-i+2]$, respectively, satisfying Property \eqref{wilson}. From Lemma \eqref{equiv}, the complexes $\cac[i], \cac[m-i+1]$ are shellable. From Lemma \eqref{join}, we get that $\str{\sigma_i}$ is shellable. Again from Lemma \eqref{equiv}, we get an enumeration $\mathcal{T}^i$ of the maximal simplices of $\str{\sigma_i}$. Let $J\subset \{2,\ldots,m-1\}$ be the blue vertices. Consider the ordering $\mathcal{T}:=\mathcal T^{m-1}, \mathcal T^{i_1}1,\ldots, \mathcal T^{i_p},$ where $i_1>i_2>\ldots>i_p$ are all the elements of $J$. We claim that $\mathcal{T}$ is an ordering of all the maximal simplices of $\cac$ satisfying the conditions of Property\eqref{wilson}.
	
	First we show that every triangulation appears in $\mathcal{T}^i$  for exactly one $i\in \{3,m-2\}$. 
	Let $T$ be any triangulation of $\poly m$ generated by $R-B$ and $R-R$ diagonals. Then either there is no diagonal of $T$ incident at the vertex 1, or there is at least one, in which case the last diagonal of $T$ (anti clockwise direction) must join 1 to a blue vertex, say $k\in \{2,m-1\}$, of the polygon. In Fig. \eqref{shell1}, we have that the last blue vertex joined to 1 is at 5. Since this diagonal is the last one, there must be a diagonal of $T$ joining the vertices $k$ and $m$.

	Hence $T\in \mathcal{T}^j$. Also, for every $j\neq j'$, we have $\mathcal{T}^j\cap \mathcal{T}^{j'}=\varnothing$ because the diagonal ($j,m$) intersects the diagonal ($1,j'$). 
	
	Now suppose that $S,T$ are two maximal simplices of $\cac$ such that $S$ precedes $T$. We need to find another maximal simplex $P$ preceding $T$ such that $P$ is obtained by flipping a diagonal of $T$ such that any diagonal common to $S$ and $T$ must be common to $P$ and $T$.  If $S,T$ belong to the same $\mathcal T_i$, then we can conclude immediately by using the induction hypothesis. So we assume that $S\in \mathcal T_j$ and $T\in  \mathcal T_k$ with $j>k$. Any diagonal common to both the triangulations $S$ and $T$ must lie outside the quadrilateral $1,k,j,m$ because they must be disjoint from the diagonals $(1,k)$, $(k,j)$, $(j,m)$. See Fig.\ref{shell1}.
Now let $P$ be the triangulation obtained from $T$ by flipping the diagonal $(m,k)$ by $(1,j)$. Since this operation happens inside the quadrilateral, we get that $S\cap T \subset P\cap T$. Also by construction, $P\in \str{\sigma_j}$ which precedes $\str{\sigma_k}$ and hence $T$. 
\end{proof}

From Theorem \ref{topo}, Lemma \ref{pseudo} and Theorem \ref{propcac}, we get that

\begin{thm}\label{main2}
For $m\geq 4$, the subcomplex $\cac$ of a convex polygon $\poly m$ with a non-trivial bicolouring is a $PL$-homeomorphic closed ball of dimension $m-4$.
\end{thm}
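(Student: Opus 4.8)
The final statement to prove is Theorem~\ref{main2}: for $m \geq 4$, the subcomplex $\cac$ of a convex polygon $\poly m$ with a non-trivial bicolouring is $PL$-homeomorphic to a closed ball of dimension $m-4$.

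The plan is to assemble the pieces that have already been established in the excerpt rather than to prove anything new from scratch. The structure of the argument is dictated by Theorem~\ref{topo} (Danaraj--Klee): a shellable $d$-pseudo-manifold with boundary is $PL$-homeomorphic to a closed $d$-ball. So all I need is to verify that $\cac$ meets both hypotheses of that theorem with $d = m-4$.

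First I would invoke Theorem~\ref{pseudo}, which already states that $\cac$ is a $d$-pseudo-manifold with boundary; that theorem was in turn obtained by combining Lemma~\ref{pure} (purity of dimension $m-4$), Lemma~\ref{strong} (strong connectedness), and Lemma~\ref{mfd} (every codimension-one simplex lies in at most two maximal simplices) --- these are exactly the three defining conditions of a $d$-pseudo-manifold with boundary. Note that $d = m-4 \geq 0$ makes sense for $m \geq 4$, and the non-triviality of the bicolouring is used in these lemmas (it guarantees, e.g., that in Lemma~\ref{mfd} the two competing diagonals across a quadrilateral cannot both be rejected). Next I would invoke Theorem~\ref{propcac}, which says $\cac$ satisfies Wilson's Property~\eqref{wilson}; combined with Lemma~\ref{equiv} (the equivalence, for pure codimension-zero subcomplexes of an arc complex, between shellability and Property~\eqref{wilson}), this yields that $\cac$ is shellable. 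Finally, feeding "shellable $d$-pseudo-manifold with boundary" into Theorem~\ref{topo} gives the conclusion: $\cac$ is $PL$-homeomorphic to a closed ball of dimension $m-4$.

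There is essentially no obstacle left at this stage, since the difficult work --- establishing the pseudo-manifold property (especially the strong connectedness argument of Lemma~\ref{strong}, which needs the careful flip-reduction to a fan triangulation, and the delicate case analysis in Lemma~\ref{mfd}) and constructing the explicit shelling order in Theorem~\ref{propcac} --- has already been carried out. If I were writing the proofs of those prerequisites myself, I would expect the main obstacle to be Theorem~\ref{propcac}: producing a concrete enumeration of all permissible triangulations and verifying Property~\eqref{wilson} requires an induction on $m$ that splits each triangulation along a diagonal through a suitably chosen blue vertex, uses the join decomposition $\str{\sigma_i} = \cac[i] \Join \cac[m-i+2]$ together with Lemma~\ref{join}, and then handles the cross-terms (maximal simplices lying in different stars $\str{\sigma_j}$, $\str{\sigma_k}$) by an explicit flip inside the quadrilateral $1,k,j,m$. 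One subtle point worth double-checking is that the bookkeeping over which vertices among $\{2,\dots,m-1\}$ are blue correctly accounts for the edge case when vertex $2$ is red (so $\sigma_2$ is absent) and for the always-present $\sigma_{m-1}$, ensuring every permissible triangulation is counted exactly once. But for the statement of Theorem~\ref{main2} as such, the proof is the three-line synthesis above.

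\begin{proof}
By Theorem~\ref{pseudo}, the subcomplex $\cac$ is a $d$-pseudo-manifold with boundary, where $d = m-4$. By Theorem~\ref{propcac}, $\cac$ satisfies Property~\eqref{wilson}; since $\cac$ is a pure codimension zero subcomplex of $\ac$, Lemma~\ref{equiv} gives that $\cac$ is shellable. Applying Theorem~\ref{topo} to the shellable $d$-pseudo-manifold with boundary $\cac$, we conclude that $\cac$ is $PL$-homeomorphic to a closed ball of dimension $m-4$.
\end{proof}
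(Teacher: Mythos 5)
Your proof is correct and follows exactly the same synthesis as the paper's own: combining Theorem~\ref{pseudo} (pseudo-manifold with boundary), Theorem~\ref{propcac} together with Lemma~\ref{equiv} (shellability), and Theorem~\ref{topo} (Danaraj--Klee). You are slightly more explicit than the paper in making the passage from Property~\eqref{wilson} to shellability via Lemma~\ref{equiv} visible, but the argument is the same.
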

\subsection{Punctured bicoloured polygons}
In this section, we prove that the subcomplex $\cacp\subset\acp$ is a closed ball of dimension $m-2$ for any non-trivial bicolouring. First we show that the subcomplex is a pseudo-manifold with boundary. 

\begin{lem}\label{purep}
	For $m\geq 2$ and any non-trivial bicolouring, the subcomplex $\cacp$ of $\acp$ is pure of dimension $m-2$.
\end{lem}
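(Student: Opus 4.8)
The plan is to adapt the proof of \lemref{pure}, the only genuinely new feature being the region that contains the puncture. As in the convex case, it suffices to prove that every simplex $\sigma$ of $\cacp$ is contained in a permissible triangulation of $\punc m$: by \thmref{acp} every triangulation of $\punc m$ is an $(m-2)$-simplex, and $\cacp$ is a subcomplex of the $(m-2)$-dimensional complex $\acp$, so this forces every maximal simplex of $\cacp$ to have dimension exactly $m-2$, which is purity.

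To produce such a triangulation I would cut $\punc m$ along the diagonals of $\sigma$, obtaining finitely many unpunctured convex polygons together with exactly one region $R_0$ containing the puncture. If $\sigma$ already contains a maximal diagonal then, $\sigma$ being permissible, that diagonal is based at a blue vertex; it is moreover the only maximal diagonal of $\sigma$, since two distinct maximal diagonals cannot be disjoint; and $R_0$ is precisely the once-punctured monogon it bounds, which admits no further diagonals. Otherwise $R_0$ is a once-punctured polygon $\punc s$ with $2\le s\le m$, carrying the induced bicolouring.

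The key point, exactly as in \lemref{pure}, is that every region of this decomposition has a blue vertex. For an unpunctured region, its boundary must contain a diagonal of $\sigma$ --- otherwise the region would be all of $\punc m$ --- and this diagonal is either an ordinary permitted diagonal or the maximal diagonal at a blue vertex, so a blue vertex occurs in either case. The same dichotomy handles $R_0=\punc s$ when $\sigma$ has no maximal diagonal: either a permitted diagonal of $\sigma$ lies on $\partial R_0$, or $\sigma=\varnothing$ and $R_0=\punc m$, in which case the non-triviality of the bicolouring supplies a blue vertex. I would then extend $\sigma$ as follows: fan-triangulate every unpunctured region at one of its blue vertices; and triangulate $R_0=\punc s$ (when it is not already a monogon) by first inserting the maximal diagonal at a blue vertex $b_0$ of $R_0$ --- a $B-B$ diagonal, hence permitted --- and then fan-triangulating at $b_0$ the unpunctured polygon cut off outside it. Every diagonal inserted in this process has a blue endpoint, hence is $R-B$ or $B-B$, so the resulting triangulation is a permissible $(m-2)$-simplex of $\cacp$ containing $\sigma$.

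I expect the main obstacle to be bookkeeping rather than conceptual: one must check that the maximal diagonal inserted inside $R_0$ really is a new, non-peripheral diagonal (which holds because $\partial R_0$ already carries a diagonal of $\sigma$, or else $\sigma$ is empty and $R_0=\punc m$), that all the arcs produced are pairwise disjoint and distinct, and that a fan based at a blue vertex never reproduces a rejected diagonal --- the last point being immediate, since a diagonal with a blue endpoint is never $R-R$. Apart from the handling of the once-punctured region $R_0$, the argument is identical to the convex case.
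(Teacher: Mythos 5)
Your proof is correct and follows essentially the same route as the paper: reduce to finding a permissible triangulation containing $\sigma$, insert a maximal diagonal based at a blue vertex of the region containing the puncture, and then fall back on Lemma~\ref{pure} (fan triangulations at blue vertices) for the unpunctured pieces. Your write-up is somewhat more explicit than the paper's --- in particular you handle the $\sigma=\varnothing$ case and the possibility that several diagonals of $\sigma$ bound the punctured region, which the paper glosses over by speaking of ``the'' separating diagonal --- but the underlying argument is the same.
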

\begin{proof}
Let $\sigma$ be any simplex of $\cacp$. If one of its 0-simplices correspond to a maximal diagonal, one of the regions in its complement is a bicoloured unpunctured polygon. See Fig.\eqref{cutmax}.
\begin{figure}
	\centering
	\includegraphics[width=10cm]{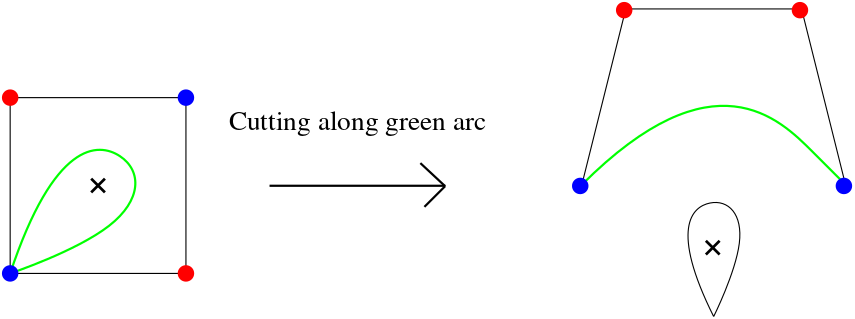}
	\caption{Complementary regions to a maximal diagonal}
	\label{cutmax}
\end{figure}

Using Lemma \ref{pure}, we have that $\sigma$ is contained in some permissible maximal simplex of $\cacp$. If none of the 0-simplices represent a maximal diagonal, let $l\in \sigma$ be the diagonal that separates the puncture from the rest of the diagonals of $\sigma$. At least one of the endpoints of $l$ is a blue vertex. We add to $\sigma$ the maximal diagonal based at this vertex. Again use Lemma \ref{pure} on its complement to get a maximal simplex of $\cacp$ containing $\sigma$.
\end{proof}
Next we show that the subcomplex is strongly connected.
\begin{lem}\label{strongp}
	For $m\geq 2$ and any non-trivial bicolouring, the subcomplex $\cacp$ is strongly connected.
\end{lem}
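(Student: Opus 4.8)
The plan is to mimic the proof of Lemma~\ref{strong}, showing that any permissible triangulation $T$ of $\punc m$ is connected by flips (within $\cacp$) to a distinguished permissible triangulation, and then deduce strong connectedness. The natural target is a \emph{permissible fan-like triangulation}: pick a blue vertex $v$, take the maximal diagonal based at $v$ (which cuts off a once-punctured monogon at $v$ and leaves a bicoloured unpunctured $m$-gon), and then fan-triangulate the remaining $m$-gon from $v$. Lemma~\ref{purep} guarantees such a triangulation exists and is permissible, and Lemma~\ref{strong} already tells us that any two fan triangulations of the unpunctured part based at blue vertices are flip-connected inside $\cac[m]$. So the real content is to flip an arbitrary $T$ to one that contains a maximal diagonal based at a blue vertex.

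First I would split into cases according to how $T$ interacts with the puncture. If $T$ already contains a maximal diagonal $\delta$ based at some vertex $w$: if $w$ is blue we are reduced (by Lemma~\ref{strong} applied to the unpunctured complementary $m$-gon, whose induced bicolouring may be trivial or not — in the trivial case use connectedness of the associahedron as in Lemma~\ref{strong}) to the target. If $w$ is red, I would first perform flips supported in the unpunctured $m$-gon to bring $T$ into a fan based at a blue vertex $v$ of that $m$-gon; then a single flip replaces $\delta$ by the maximal diagonal based at $v$ (this is exactly the flip that, in $\acp$, exchanges the two maximal diagonals based at the two endpoints of the triangle containing the puncture-side, and it is permissible because $v$ is blue). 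If $T$ contains no maximal diagonal, let $l\in T$ be the unique diagonal of $T$ separating the puncture from the rest; as in Lemma~\ref{purep} at least one endpoint of $l$, say $v$, is blue, and the region between $l$ and the puncture is a once-punctured bigon, triangulated by $l$ together with one further diagonal $l'$. Flipping $l'$ produces the maximal diagonal based at $v$ (this is the standard flip inside a once-punctured bigon, and it is permissible), landing us in the previous case.

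Once every permissible triangulation is flip-connected within $\cacp$ to the fixed target triangulation, the dual graph of $\cacp$ is connected, which is the definition of strong connectedness for a pure complex; purity is Lemma~\ref{purep}. I would also remark (or fold into the case analysis) that all flips used stay inside $\cacp$: each intermediate diagonal introduced is either a $B$--$\ast$ diagonal of the unpunctured part or a maximal diagonal based at a blue vertex, hence permissible, and we never need to introduce a rejected $R$--$R$ diagonal.

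The main obstacle I anticipate is the bookkeeping around \emph{trivially bicoloured} complementary sub-polygons: when we cut $\punc m$ along a maximal diagonal or along $l$, the induced bicolouring on the unpunctured piece can fail to be non-trivial, so Lemma~\ref{strong} (stated for non-trivial bicolourings) does not directly apply. This is handled exactly as in the proof of Lemma~\ref{strong} — in a trivially bicoloured polygon every diagonal is permitted, so $\cac$ equals the full arc complex $\ac$, whose dual (the associahedron) is connected — but one has to state this carefully and check that the fan one lands on is still based at a \emph{blue} vertex, or at least can be moved to one. The rest of the argument is a routine induction/flip-chasing in the spirit of Lemma~\ref{strong}.
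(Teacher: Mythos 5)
Your overall strategy is the same as the paper's: cut along a maximal diagonal and invoke Lemma~\ref{strong} in the unpunctured complement. However, there is a structural mix-up in your case analysis that needs to be addressed.

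Cases~2 and~3 of your analysis are vacuous. Every maximal simplex of $\acp$ decomposes $\punc m$ into triangles and a single once-punctured monogon, so every triangulation of $\punc m$ contains \emph{exactly one} maximal diagonal. Moreover, a maximal diagonal has both endpoints at the same vertex, so it is $R$--$R$ when that vertex is red and $B$--$B$ when it is blue; in a \emph{permissible} triangulation the maximal diagonal must therefore be based at a blue vertex. So ``$T$ has a maximal diagonal at a red vertex'' and ``$T$ has no maximal diagonal'' simply never occur. Your genuinely relevant case is only Case~1.

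But Case~1 as you state it is not a complete reduction. If the maximal diagonal of $T$ is $\delta_w$ with $w$ blue but $w \neq v$ (your target's base vertex), then Lemma~\ref{strong} applied to the cut $(m+1)$-gon flips $T$ only \emph{within the star of $\delta_w$}; it can never change the maximal diagonal and therefore cannot land on your fixed target, which contains $\delta_v$. The mechanism you actually need here is precisely the one you describe in your (vacuous) red case: first flip inside the cut polygon to the fan based at $v$ (so that the triangle adjacent to $\delta_w$ has third vertex $v$), then flip $\delta_w$ to $\delta_v$ inside the once-punctured bigon bounded by the two $v$--$w$ diagonals. Both of those $v$--$w$ diagonals, as well as $\delta_v$, are permissible because $v$ is blue, so the crossing stays inside $\cacp$. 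So your attempt does contain the essential idea for moving between stars of different maximal diagonals — it is just filed under the wrong case. For comparison, the paper's own proof is terser still: it connects $T$ only to a fan based at $T$'s \emph{own} maximal diagonal (which depends on $T$), and does not spell out the step that crosses between stars of different maximal diagonals; your red-case flip is exactly the missing ingredient. If you delete the vacuous cases and fold the once-punctured-bigon flip into the blue case, you obtain a complete argument (with the same caveat you already noted about trivially bicoloured complementary pieces being handled via the connectedness of the associahedron, as in Lemma~\ref{strong}).
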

\begin{proof}
Let $T$ be a permissible triangulation of $\punc m$. Then there is a maximal diagonal based at a blue vertex. Cutting along the diagonal we get a bicoloured unpunctured polgygon, triangulated by the rest of the diagonals of $T$. Let $T_0$ be the fan triangulation based at one of the two blue vertices given by the maximal diagonal. By Lemma \ref{strong}, $T$ is connected to $T_0$ by flips and all the intermediate triangulations remain disjoint from the maximal diagonal.  
\end{proof}

Now we show that
\begin{lem}\label{mfdp}
	Every codimension one simplex of the subcomplex $\cacp$ is contained in at most two maximal simplices of $\cacp$. 
\end{lem}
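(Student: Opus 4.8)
The plan is to mimic the argument of Lemma \ref{mfd} for the unpunctured case, transferring the statement to Penner's result that $\acp$ itself is a pseudo-manifold (Theorem \ref{acp}). First I would take a codimension one simplex $\sigma$ of $\cacp$, where $d=m-2$ is the dimension of $\cacp$ by Lemma \ref{purep}; since $\sigma$ is also a codimension one simplex of $\acp$ and $\acp$ is a $d$-pseudo-manifold with boundary, $\sigma$ is contained in at most two maximal simplices of $\acp$. If it is contained in at most one, we are done. So suppose $\sigma$ lies in exactly two maximal simplices $T, T'$ of $\acp$; these are two triangulations of $\punc m$ agreeing on all but one diagonal, and I must show that whenever both $T$ and $T'$ fail to be permissible they cannot both differ from $\sigma$ by a single move — more precisely, that at most one of $T, T'$ is permissible once $\sigma \subset \cacp$.

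The key case analysis is on the two diagonals $e \in T$, $e' \in T'$ that get exchanged. There are two geometric situations: either the flip happens inside a quadrilateral region (as in the unpunctured case), or the flip happens "around the puncture", i.e.\ $e$ and $e'$ together with one diagonal of $\sigma$ bound a once-punctured bigon, so $e,e'$ are the two diagonals joining the same pair of vertices on either side of the puncture, or one of $e,e'$ is a maximal diagonal. In the quadrilateral case the argument is verbatim that of Lemma \ref{mfd}: the endpoints involved are four vertices of $\poly m$, if $e,e'$ were both $R$--$R$ or both $R$--$B$ then $\sigma$ would contain an $R$--$R$ diagonal bounding that quadrilateral, contradicting $\sigma \subset \cacp$; hence $\{e,e'\}$ is $\{B\text{--}B,\ R\text{--}B\}$ or $\{B\text{--}B,\ B\text{--}B\}$, and in the first case only the $R$--$B/B$--$B$ triangulation is permissible so $\sigma$ is a boundary simplex, while in the second both are permissible. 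In the puncture case, I would argue similarly: the two exchanged diagonals share their endpoint vertex set $\{u,v\}$ (or a single vertex $u$ in the maximal-diagonal subcase), so they have the same colour type; if that type is $R$--$R$ neither triangulation is permissible, which cannot happen since some permissible maximal simplex contains $\sigma$; if it is $R$--$B$ or $B$--$B$ then both $T$ and $T'$ are permissible and $\sigma$ is interior.

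The main obstacle I anticipate is bookkeeping the puncture-adjacent flips carefully: unlike the unpunctured case, a codimension one simplex of $\acp$ need not determine a single quadrilateral, and one must correctly enumerate which pairs of diagonals can be exchanged when the puncture is involved (the maximal-diagonal flip, and the "flip across the puncture" where two arcs with the same endpoints but separating the puncture differently are swapped). Getting the colour-type argument right in those degenerate configurations — in particular checking that the two candidate diagonals genuinely have matching endpoint colours, so that the "both $R$--$R$" obstruction is the only way to lose permissibility — is where the real content lies; once that is in place, the conclusion "at most two maximal simplices of $\cacp$, with $\sigma$ on the boundary exactly when one of the two $\acp$-neighbours is rejected" follows exactly as before.
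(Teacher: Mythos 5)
Your opening observation is exactly the paper's entire proof: since $\cacp\subset\acp$ and both are pure of dimension $m-2$ (Lemma \ref{purep}), any maximal simplex of $\cacp$ containing $\sigma$ is a maximal simplex of $\acp$ containing $\sigma$, of which there are at most two by Theorem \ref{acp}. This already finishes the lemma, and the paper stops essentially there, only adding that the number inside $\cacp$ is the number of the two $\acp$-neighbours with no rejected diagonal. Everything in your write-up after ``If it is contained in at most one, we are done'' is therefore unnecessary for the stated claim, and it also contains genuine errors in the case analysis near the puncture.

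Concretely, the flip structure you describe for the punctured case is wrong. First, there is no flip in $\acp$ that exchanges two arcs $a_1,a_2$ with the same endpoints $\{u,v\}$ lying on either side of the puncture: if such a pair bounds a once-punctured bigon, the two ways of completing a triangulation of that bigon are the two maximal diagonals $d_u$ and $d_v$, not a swap $a_1\leftrightarrow a_2$; removing $a_1$ from a triangulation never leaves a complementary region whose two fills are $a_1$ and $a_2$. Second, in the maximal-diagonal flip $d_u\leftrightarrow d_w$ the two exchanged arcs are based at \emph{different} vertices $u$ and $w$, not at a common vertex, so your inference that ``they have the same colour type'' fails. In fact, $d_u$ is permitted iff $u$ is blue and $d_w$ iff $w$ is blue, and since $\sigma$ only forces at least one of $u,w$ to be blue (via the two $u$--$w$ arcs in $\sigma$), the case ``one blue, one red'' does occur: then exactly one of $T,T'$ is permissible and $\sigma$ is a boundary simplex, contrary to your conclusion that $\sigma$ is always interior in the puncture case. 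These errors do not invalidate the lemma — your first paragraph already proves ``at most two'' — but they misdescribe the boundary of $\cacp$, which matters for Theorem \ref{pseudo}, so the case analysis should either be dropped (following the paper) or corrected along the lines above.
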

\begin{proof}
Let $\sigma$ be a codimension one simplex of $\cacp$. From Theorem \ref{acp} we get that $\sigma$ is contained in exactly two codimension 0 simplices $\eta, \eta'$ of $\acp$. From Lemma \ref{purep}, we know that at least one of $\eta, \eta'$ is inside $\cacp$. If none of the 0-simplices of $\eta, \eta'$ contain a rejected diagonal, then they both are inside $\cacp$, otherwise only one of them is.
\end{proof}

Finally we prove that the the subcomplex is shellable. 
 \begin{thm}\label{shellp}
	For $m\geq 2$, the simplicial complex $\cacp$ satisfies Property \eqref{wilson}.
 \end{thm}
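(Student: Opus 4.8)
The plan is to mimic the strategy used for Theorem~\ref{propcac}, now adapting it to the punctured setting by reducing, via a maximal diagonal, to the unpunctured case already treated. I would argue by induction on $m$, with the base case $m=2$ (or $m=3$) handled by direct inspection: the relevant subcomplex is a single simplex (or otherwise visibly satisfies Property~\eqref{wilson}, cf.\ Fig.~\ref{acsqalt}). For the inductive step, fix a non-trivial bicolouring of $\punc m$ and pick an edge of the polygon joining a blue vertex (numbered $1$) to a red vertex (numbered $m$), enumerating the remaining vertices in between. The key combinatorial observation is that every permissible triangulation $T$ of $\punc m$ contains exactly one maximal diagonal, and its basepoint is a blue vertex $k\in\{1,\ldots,m\}$; cutting along this maximal diagonal produces a bicoloured unpunctured $(m+1)$-gon together with a once-punctured monogon, so the star $\str{\mu_k}$ of the $0$-simplex $\mu_k$ corresponding to the maximal diagonal at vertex $k$ is the join of $\cac[m+1]$ with a point. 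By Theorem~\ref{propcac} and Lemma~\ref{equiv}, $\cac[m+1]$ is shellable, hence so is $\str{\mu_k}$ by Lemma~\ref{join}, and Lemma~\ref{equiv} again furnishes an enumeration $\mathcal{T}^k$ of its maximal simplices satisfying Property~\eqref{wilson}.

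Next I would concatenate these blocks. Let $J\subseteq\{1,\ldots,m\}$ be the set of blue vertices, list them as $k_1>k_2>\cdots>k_p$, and set $\mathcal{T}:=\mathcal{T}^{k_1},\mathcal{T}^{k_2},\ldots,\mathcal{T}^{k_p}$. Because any two distinct maximal diagonals (at distinct blue vertices) intersect, the stars $\str{\mu_{k}}$ are pairwise disjoint as sets of maximal simplices and their union is all of $\cacp$ by the observation above; so $\mathcal{T}$ does list each maximal simplex exactly once. To check Property~\eqref{wilson}, take maximal simplices $S$ preceding $T$. If both lie in the same block $\mathcal{T}^{k}$, the induction hypothesis (transported through the join via Lemmas~\ref{join} and~\ref{equiv}) gives the required intermediate simplex $P$ inside $\str{\mu_k}$. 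If $S\in\mathcal{T}^{k}$ and $T\in\mathcal{T}^{k'}$ with $k>k'$, then any diagonal common to $S$ and $T$ must be disjoint from the maximal diagonal of $T$ (at $k'$) and from the maximal diagonal of $S$ (at $k$); such a shared diagonal therefore lies outside the "inner" region bounded by these two diagonals, exactly as in the unpunctured proof. One then takes $P$ to be the triangulation obtained from $T$ by a single flip that replaces the maximal diagonal at $k'$ by the maximal diagonal at $k$ (together with the forced local adjustment), which lands in an earlier block $\str{\mu_k}$ and satisfies $S\cap T\subseteq P\cap T$ together with the codimension-one condition.

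The main obstacle, I expect, is the flip step in the mixed case: unlike in a polygon, flipping near the puncture is not a single exchange of one diagonal for another, because the maximal diagonal cuts off a once-punctured monogon rather than a triangle. I would handle this by being careful about what "the diagonal separating the puncture" is in $T$ — the maximal diagonal at $k'$ together with the unique diagonal $d$ it forms a (punctured) bigon with — and showing that a suitable sequence inside the quadrilateral-like region spanned by vertices $1,k',k$ and the puncture, or more cleanly a direct appeal to the join decomposition $\str{\mu_{k}}\cong\cac[m+1]\Join\{\mathrm{pt}\}$, provides the intermediate simplex $P$ with the maximal diagonal now based at the blue vertex $k$. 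A secondary point requiring care is confirming that in every permissible triangulation the maximal diagonal is based at a blue (not red) vertex: this follows since a maximal diagonal based at a red vertex would, after cutting, leave an unpunctured polygon one of whose triangulations forced through $T$ uses a rejected $R$–$R$ diagonal unless the whole boundary arc on that side is blue — but then non-triviality of the bicolouring is contradicted, and in the trivial case there are no rejected diagonals and the statement reduces to Theorem~\ref{acp} via Wilson's Lemma. Once Property~\eqref{wilson} is established, Lemma~\ref{equiv} gives shellability, Lemmas~\ref{purep}--\ref{mfdp} give that $\cacp$ is a $(m-2)$-pseudo-manifold with boundary, and Theorem~\ref{topo} concludes that $\cacp$ is a $PL$ closed ball of dimension $m-2$.
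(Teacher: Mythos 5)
Your proposal follows the paper's strategy almost exactly: partition the maximal simplices of $\cacp$ into blocks indexed by the blue basepoint of the (unique) maximal diagonal, observe that each block $\str{d_{i_j}}$ is a copy of $\cac[m+1]$ for the cut polygon and apply Theorem~\ref{propcac} to shell it, then concatenate the blocks. This matches the paper's proof in structure, including the pairwise-disjointness of blocks (intersecting maximal diagonals) and the handling of same-block pairs via the inherited shelling.

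The place where you (rightly) sense trouble is, however, slightly misdiagnosed, and both your sketch and the paper's write-up leave a real gap there. Flipping a maximal diagonal $d_{i_k}$ inside a triangulation $T$ of $\punc m$ \emph{is} a single exchange: removing $d_{i_k}$ produces a once-punctured bigon with corners $i_k$ and $c$, where $c$ is the third vertex of the unique triangle of $T$ adjacent to $d_{i_k}$, and the two triangulations of this bigon are $d_{i_k}$ and $d_c$. So there is no ``forced local adjustment'' or ``suitable sequence of flips''; the actual problem is that the vertex $c$ produced is determined by $T$ and is in general \emph{not} $i_j$, need not even be blue (so $d_c$ might be a rejected $R$--$R$ loop and $P$ not permissible), and need not lie in an earlier block. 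Your stated replacement of $d_{i_{k'}}$ by $d_{i_k}$ is therefore not a flip in general; the same imprecision is present in the paper's sentence ``let $P$ be the triangulation obtained from $T$ by flipping the diagonal $d_{i_k}$ with $d_{i_j}$.'' To close this one needs a case split: if $S\cap T$ is already codimension one then $S=T\smallsetminus\{d_{i_k}\}\cup\{d_{i_j}\}$, $c=i_j$ automatically, and $P=S$ works; if $S\cap T$ has higher codimension then one must find the intermediate $P$ \emph{inside} the block $\str{d_{i_k}}$, earlier in its shelling, and one then has to argue that the Theorem~\ref{propcac} shelling chosen for $\str{d_{i_k}}$ indeed starts with simplices whose $d_{i_k}$-flip lands in a strictly earlier block (equivalently, that the codimension-one faces of $\str{d_{i_k}}$ lying in $\bigcup_{j<k}\str{d_{i_j}}$ are covered first). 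Neither your sketch nor the paper spells this out, so this step is genuinely incomplete as written, though the overall strategy appears salvageable.
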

\begin{proof}
	Like in the proof of Theorem \ref{propcac}, we enumerate the vertices in the anticlockwise direction so that the first vertex is blue and the $m$-th is red. Let $d_{i_1},\ldots, d_{i_p}$ be the maximal diagonals based at the blue vertices $i_1,\ldots, i_p$, respectively. The star $\str{d_{i_j}}$ is the coloured arc complex $\cac[m+1]$ of the unpunctured bicoloured polygon $\poly {m+1}$ obtained cutting $\punc m$ along $d_{i_j}$. Let $\mathcal{T}^i$ be the shelling order given by Theorem \ref{propcac}. We claim that $\mathcal{T}:=\mathcal{T}^{i_1},\ldots, \mathcal{T}^{i_p}$ is a shelling order of $\cacp$ satisfying Property \ref{wilson}.
	Since every triangulation of $\punc m $ contains exactly one maximal diagonal based at some blue vertex, $\mathcal{T}$ is an enumeration of the maximal simplices of $\cacp$.
	
	Now suppose that $S,T$ are two maximal simplices of $\cacp$ such that $S$ precedes $T$. If they belong to the same star $\str{d_{i_j}}$, then we conclude using the proof of Theorem \ref{propcac}. Now let $S\in \str{d_{i_j}}$ and $T\in \str{d_{i_k}}$ with $j<k\leq p$. The diagonals of $S\cap T$, if any, lie outside the region containing $d_{i_k}\cup d_{i_k}$ bounded by the two diagonals joining the vertices $i_j, i_k$. See Fig.\eqref{maxswap}.
	\begin{figure}
		\centering
		\includegraphics[width=5cm]{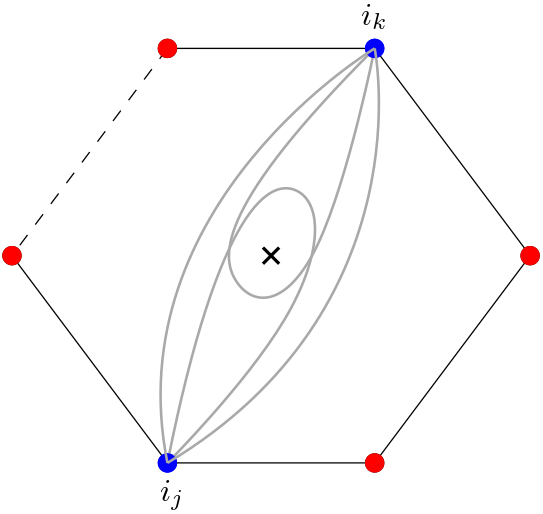}
		\caption{}
		\label{maxswap}
	\end{figure}
	Let $P$ be the triangulation obtained from $T$ by flipping the diagonal $d_{i_k}$ with $d_{i_j}$. So $P\in \str{d_{i_j}}$, hence preceding $T$ in the order. Also by construction of $P$, we have that $S\cap T \subset P\cap T$. This concludes the proof. 
\end{proof}
	\begin{figure}[!ht]
	\centering
	\includegraphics[width=10cm]{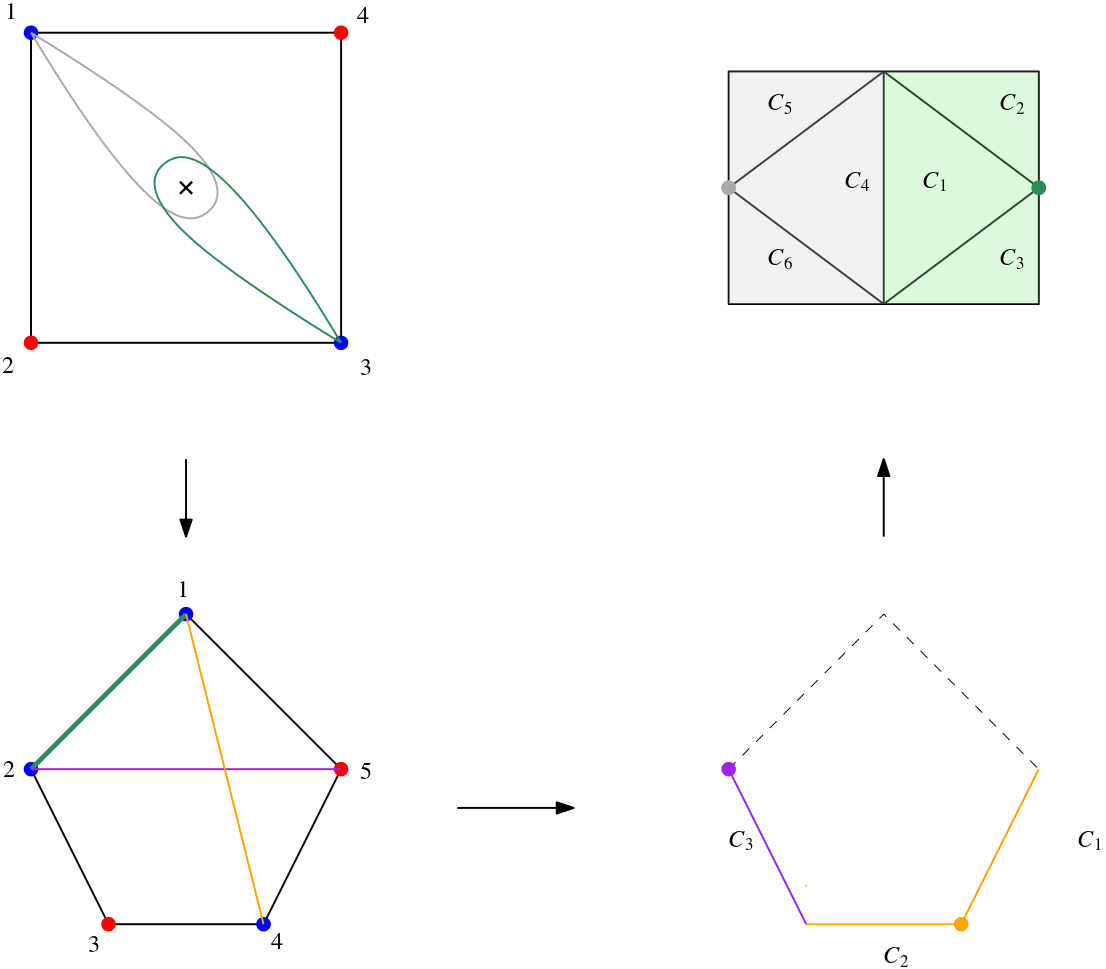}
	\caption{A shelling order for $\cacp[4]$ with alternate bicolouring}
	\label{shellorder}
\end{figure}
See Fig.\eqref{shellorder} for a shelling order for the subcomplex $\cacp[4]$ when $\punc 4$ is endowed with an alternate colouring. In this case, there are only two permissible maximal diagonals: $d_1$ and $d_3$. Cutting along $d_1$, we get an unpunctured pentagon with a non-trivial bicolouring. As in the proof of Theorem \eqref{main2}, we consider the stars of $\sigma_2$ (purple) and $\sigma_4$ (orange). The subcomplex $\cac[5]$ is a closed 1-ball as shown in the bottom right panel. A shelling order of this subcomplex gives a shelling order for $\str{d_2}$. Similarly, we get a shelling order for $\str{d_3}$. Combining the two, we get a shelling order for $\cacp[4]$. 

From Lemmas \eqref{purep},\eqref{strongp},\eqref{mfdp} and Theorem \eqref{mainp}, we get that,
\begin{thm}\label{mainp}
	For $m\geq 2$, the subcomplex $\cacp$ of a once-punctured polygon $\punc m$ with any non-trivial bicolouring is a closed ball of dimension $m-2$.
\end{thm}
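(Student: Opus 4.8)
The plan is to assemble the structural facts about $\cacp$ that have just been proved and to feed them into the Danaraj--Klee criterion (Theorem~\ref{topo}). First I would observe that Lemmas~\ref{purep}, \ref{strongp} and \ref{mfdp} together say exactly that $\cacp$ is a $d$-pseudo-manifold with boundary, where $d:=m-2$: it is pure of dimension $d$ (Lemma~\ref{purep}), strongly connected — its dual graph is connected because any two permissible triangulations are joined by a path of permissible flips passing through a fan triangulation across a maximal diagonal (Lemma~\ref{strongp}) — and every $(d-1)$-simplex is contained in at most two maximal simplices of $\cacp$ (Lemma~\ref{mfdp}). The boundary of $\cacp$ is then the union of the $(d-1)$-simplices lying in a single maximal simplex.

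Next I would invoke shellability. By Theorem~\ref{shellp} the complex $\cacp$ satisfies Wilson's Property~\ref{wilson}; since $\cacp$ is a pure codimension-zero subcomplex of the finite arc complex $\acp=\mathcal{A}(\punc m)$, Lemma~\ref{equiv} upgrades this to a genuine shelling order. Concretely, the shelling is the enumeration $\mathcal{T}$ constructed in the proof of Theorem~\ref{shellp}: one concatenates the shelling orders of the stars $\str{d_{i_j}}$ of the permissible maximal diagonals $d_{i_1},\dots,d_{i_p}$, each such star being the unpunctured bicoloured arc complex $\cac[m+1]$ obtained by cutting $\punc m$ along $d_{i_j}$ and therefore shellable by Theorem~\ref{propcac} together with Lemmas~\ref{equiv} and~\ref{join}. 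Having both properties in hand, I would conclude by applying Theorem~\ref{topo}: a shellable $d$-pseudo-manifold with boundary is $PL$-homeomorphic to a closed $d$-ball, and with $d=m-2$ this is precisely the assertion. This also yields the corollaries of the introduction once the combinatorial equivalence of Section~\ref{appli} identifies $\hac{\depu n}$ with $\cacp[2n]$ for the alternate bicolouring.

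Since this statement is essentially a packaging of the preceding results, no genuinely new obstacle arises at this stage; the substance is in the ingredients, and the most delicate of these is the input to the shelling step, Theorem~\ref{shellp}. There one must verify three things that would repay care: that cutting a permissible triangulation along its unique maximal diagonal identifies $\str{d_{i_j}}$ with $\cac[m+1]$ compatibly with Wilson's property, that the concatenated enumeration hits every maximal simplex exactly once (each triangulation of $\punc m$ contains exactly one maximal diagonal, necessarily at a blue vertex), and that when $S\in\str{d_{i_j}}$ precedes $T\in\str{d_{i_k}}$ the flip of $d_{i_k}$ to $d_{i_j}$ yields a predecessor $P$ of $T$ with $S\cap T\subset P\cap T$ — i.e.\ that all diagonals common to $S$ and $T$ avoid the bigon bounded by the two diagonals joining $i_j$ and $i_k$. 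A secondary point worth double-checking is that in Lemma~\ref{purep} at least one of the two ambient maximal simplices of $\acp$ through a given codimension-one face survives in $\cacp$, which is what makes ``at most two'' the correct count and keeps $\cacp$ from degenerating.
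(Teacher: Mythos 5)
Your proposal is correct and follows essentially the same route as the paper: it assembles Lemmas~\ref{purep}, \ref{strongp}, \ref{mfdp} into the pseudo-manifold-with-boundary statement, upgrades Theorem~\ref{shellp} to a shelling via Lemma~\ref{equiv}, and concludes by Danaraj--Klee (Theorem~\ref{topo}). The only difference is cosmetic --- you spell out the chain of citations more explicitly than the paper does (the paper's one-line proof even contains a self-referential typo, citing Theorem~\ref{mainp} where Theorem~\ref{shellp} is meant).
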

%
%

\section{Applications: decorated hyperbolic polygons}\label{appli}
\subsection{Hyperbolic geometry}
In this section we recall a few definitions from hyperbolic geometry which will be used in Section \ref{appli} to give the application of our main theorems in the context of hyperbolic polygons, as previewed in the introduction. 

The hyperbolic plane, denoted by $\HP$, is the unique (up to isometry) complete simply-connected Riemannian 2-manifold of constant curvature equal to -1. Its boundary, denoted by $\HPb$, is homeomorphic to a circle $\s 1$. 
The subset $\{z=x+iy\in \C \,| \,y>0\}$ of the complex plane is the upper half-space model of the hyperbolic space of dimension 2.
The geodesics are given by semi-circles whose centres lie on $\R$ or straight lines that are perpendicular to $\R$. We shall call the former as \emph{horizontal} and the latter as $vertical$ geodesics. The boundary at infinity $\HPb$ is given by $\R \cup \{\infty\}$. The orientation-preserving isometry group is given by $\psl$ that acts by Möbius transformations on $\HP$. This group has three types of elements: elliptic (one fixed point in $\HP$), parabolic (one fixed point in $\HPb$) and hyperbolic (two fixed points in $\HPb$).


\paragraph{Horoball connection.}A horocycle based at a point in $\HPb$ is the orbit of a parabolic element fixing that point. A horoball is the convex hull of a horocycle. A geodesic, whose endpoints are decorated with pairwise disjoint horoballs, is called a \emph{horoball connection}. The length of a horoball connection is the hyperbolic length of the geodesic segment subtended by the two horoballs.

\subsection{The polygons}

\paragraph{Ideal polygons.}An \emph{ideal $n$-gon}, denoted by $\ip n$, is defined as the convex hull in $\HP$ of $n\,(\geq3)$ distinct points (called \emph{vertices}) on $\HPb$. The \emph{edges} are infinite geodesics of $\HP$ joining two consecutive vertices. Two consecutive edges meet at a point on $\HPb$, forming a \emph{spike}. The restriction of the hyperbolic metric to an ideal polygon gives it a metrically complete finite-area hyperbolic metric with geodesic boundary.

\paragraph{Punctured polygons.}For $n\geq 2$, an \emph{ideal once-punctured $n$-gon}, denoted by $\puncp n$, is another non-compact hyperbolic surface with geodesic boundary, obtained from an ideal $(n+2)$-gon, by identifying two consecutive edges using a parabolic element of $\psl$ that fixes the common vertex. The edges of the polygon are the connected components of the boundary. The vertices are the quotients of the vertices of $\ip{n+2}$.

\begin{figure}[h!]
	\centering
	\includegraphics[width=10cm]{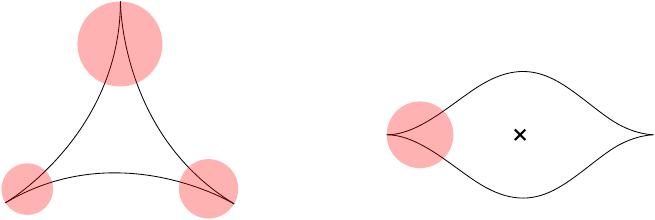}
	\caption{The two types of decorated hyperbolic polygons}
	\label{4typespoly}
\end{figure}
A spike at a vertex $v$ of an ideal (possibly punctured) polygon is said to be \emph{decorated} if a horoball, based at $v$ is added. 
\paragraph{Decorated Polygons.} A \emph{partially decorated hyperbolic polygon} is a hyperbolic polygon with some of its spikes decorated with horoballs. For $n\geq 3$ and $2\leq r\leq n$, we denote by $\pdep{n}{r}$, the set of all ideal $n$-gons with $r$ decorated spikes. We similarly define $\pdepu nr$ for $n\geq 1$ and $1\leq r\leq n$. See Fig.\ref{4typespoly} for a decorated triangle and a decorated once-punctured bigon.

The deformation space of a partially decorated ideal polygon $\Pi\in\pdep n r$, ($n\geq 3$) (resp. once-punctured polygon $\Pi\in\pdepu nr$, ($n\geq 1$)) is an open ball of dimension $n+r-3$ (resp. $n+r-1$). See \cite{panda2023strip} for a proof. 


\paragraph{Admissible deformations.}Let $\mathcal{H}$ be the set of all horoball connections of the (possibly once-punctured) polygon $\Pi$. Then we can define the following smooth positive function for every $\be\in \mathcal{H}$:
\[
\begin{array}{ccl}
	l_\be:&\tei \Pi\longrightarrow& \R_{>0}\\
	&m\mapsto&\text{length of $l_\be$ w.r.t $m$.}
\end{array}
\] 
Given a hyperbolic polygon $\Pi$ with metric $m$, a vector in the tangent space $\tang{\Pi}$ is called an \emph{infinitesimal deformation} of $\Pi$.
When $\mathcal{H}\neq \varphi$, then an infinitesimal deformation $v$  is said to be \emph{admissible} if there exists a constant $K>0$ such that for all $\be\in \mathcal{H}$, \[ \mathrm{d}l_\be (v) \geq K l_\be . \] The \emph{admissible cone}, denoted by $\adm{m}$ of a metric $m\in\tei {\Pi}$ on a decorated polygonal surface $\dep n$ is defined to be the set of all admissible deformations of the given metric. The admissible cone is open and convex in $\tang {\pdep nr}$.
In the case of a decorated polygon $\Pi\in \pdep nr$, there are only finitely many horoball connections, say $\be_1,\ldots, \be_{p(n)}$. Then we have that the admissible cone is the intersection of finitely many open halfspaces:
\[\adm m=\bigcap\limits_{i=1}^{p(n)} \{dl_{\be_i}>0\}. \]

\subsection{Arc complexes of decorated polygons}
On these partially decorated polygons we consider two types of arcs.

An \emph{arc} on a hyperbolic polygon $\Pi$, is an embedding $\al$ of a closed interval $I\subset \R$ into $\Pi$. There are two possibilities depending on the nature of the interval:
\begin{enumerate}
	\item $I=[a,b]$: In this case, the arc $\al$ is finite. We consider those finite arcs that verifiy:  $\al(a),\al(b) \in \partial \Pi$ and $\al(I)\cap S=\set{\al(a),\al(b)}$. Furthermore we do not consider the finite arcs that separate only one undecorated spike from the rest of the surface.
	\item $I=[a,\infty)$: These are embeddings of hyperbolic geodesic rays in the interior of the polygon such that  $\al(a)\in \partial \Pi$.  The infinite end converges to a decorated spike of the polygon.
\end{enumerate}  


Let $\mathscr A$ be the set of all non-trivial arcs of the two types above. 
The \emph{arc complex} of a partially decorated hyperbolic is a simplicial complex $\hac{\Pi}$ whose 0-simplices are given by the isotopy classes of arcs in $ \mathscr A$ fixing the boundary and spikes, and for $k\geq 1$, every $k$-simplex is given by a $(k+1)$-tuple of pairwise disjoint and distinct isotopy classes. A simplex $\sigma$ is said to be \emph{filling} if the arcs corresponding to $\sigma^{(0)}$  decompose the surface into topological disks with at most one vertex and a punctured disk with no vertex. The \emph{pruned arc complex} of a polygon $\Pi$, denoted by $\sac \Pi$ is the union of the interiors of the filling simplices of the arc complex $\hac \Pi$.

Now we establish a link between these decorated polygons and abstract polygons with bicolourings. 
\begin{figure}[!h]
	\centering
	\includegraphics[width=12cm]{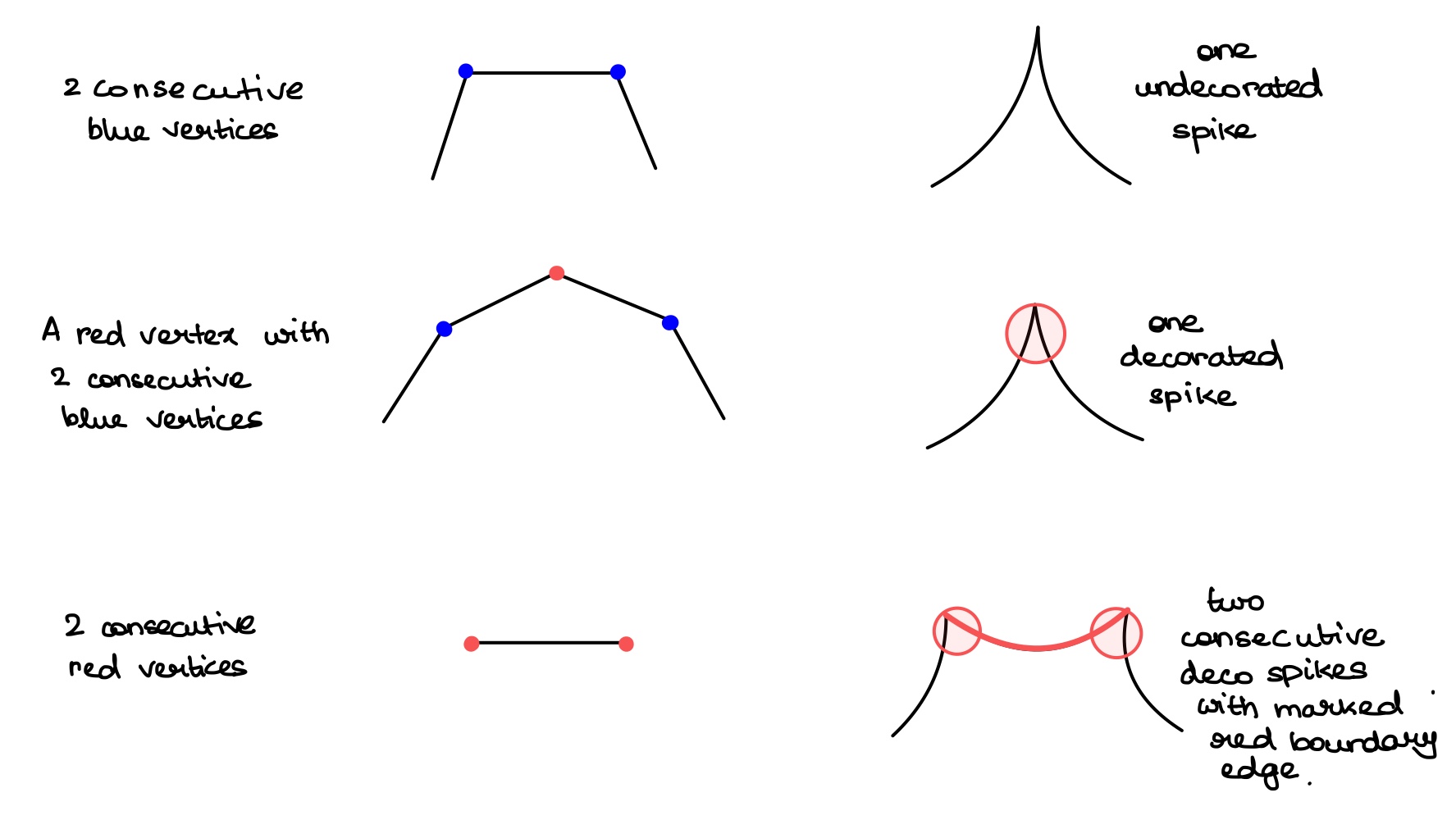}
	\caption{Correspondance between bicoloured polygons and partially decorated hyperbolic polygons.}
	\label{fig: corr}
\end{figure}
To every partially (possibly once-punctured) decorated polygon $\Pi$, one can associate the abstract polygon $\mathcal{P}$ equipped with a $B,R$-colouring of its vertices, in the following way:
\begin{itemize}
	\item a decorated vertex of $\Pi$ corresponds to a red vertex of $\mathcal{P}$,
	\item an edge of $\Pi$ corresponds to a blue vertex of $\mathcal{P}$,
	\item an undecorated spike of $\Pi$  corresponds to two consecutive blue vertices of $\mathcal{P}$,
	\item a decorated spike of $\Pi$ corresponds to one red vertex with two blue vertices on either side,
\end{itemize}
See Fig.\eqref{fig: corr}.
So we have the bijection:
\[\cur{\text{Isotopy classes of finite arcs of } \Pi} \leftrightarrow \cur{B-B\text{ diagonals}}\]
\[\cur{\text{Isotopy classes of ray-type arcs } \Pi} \leftrightarrow \cur{B-R\text{ diagonals} }\]

Hence the arc complex $\hac {\pdep nr}$ (resp. $\hac {\pdepu nr}$) is isomorphic to the subcomplex $\acsub{\poly{n+r}}$ (resp. $\acsub{\punc {n+r}}$) of $\ac[n+r]$ (resp. $\acp[n+r]$), where $\poly{n+r}$ and $\punc {n+r}$ are bicoloured polygons with $r$ red vertices and $n$ blue vertices.

From Theorem \ref{main2} we get that
\begin{cor}
For $n\geq 3$ and $2\leq r\leq n$, the arc complex $\hac{\pdep nr}$ of a decorated ideal polygon $\pdep nr$ is $PL$-homeomorphic to a closed ball of dimension $n+r-4$.
\end{cor}

Finally, from Theorem \ref{mainp}, we get that 
\begin{cor}
For $n\geq 1$ and $1\leq r\leq n$, the arc complex $\hac{\pdepu nr}$ of a decorated once-punctured polygon $\pdepu nr$ is $PL$-homeomorphic to a closed ball of dimension $n+r-2$.
\end{cor}

Now we consider a partially decorated hyperbolic polygons with marked edge-like horoball connections. Then the arc complex of is generated by arcs of $\pdep nr$ that do not touch marked boundary. This condition translates to:
\begin{itemize}
\item two consecutive decorated spikes with a marked edge-type horoball connection between them corresponds to two consecutive red vertices of $\mathcal{P}$,
\end{itemize}
Then we have that 
\begin{thm}
Let $\Pi\in\pdep nr, \pdepu nr$ be a partially decorated polygon with $h\leq r$ marked horoball connections. Then, the arc complexes $\hac {\pdep nr}, \hac{\pdepu nr}$ are PL-homeomorphic to closed balls of dimensions $n+r-h-4$ and $n+r-h-2$ respectively, except when $h=r=n$, in which case there are no permitted arcs. 
\end{thm}


\subsection{The strip map}
A strip deformation of a polygon is done by cutting the surface along a geodesic arc and gluing a strip of the hyperbolic plane $\HP$, without any shearing. In particular, along a finite arc we glue a hyperbolic strip and along a ray-type arc we glue a parabolic strip. 

Given a polygon $\Pi$ endowed with a metric $m\in \tei \Pi$, a strip template is the following data:
\begin{itemize}
	\item an $m$-geodesic representative $\al_g$ from every isotopy class $\al$ of arcs in $\mathcal{K}$, along which the strip deformation is performed,
	\item a point $p_\al\in \al_g$ where the waist of the strip being glued must lie.  
\end{itemize} 
For more detailed discussion, see \cite{panda2023strip}. 

Given an isotopy class of arcs $\al$ of a polygon $\Pi$ and a strip template $\{(\al_g,\pal,\wal)\}_{\al\in \mathcal{K}}$ adapted to the nature of $\al$ for every $m\in \tei {\Pi}$, 
	define the \emph{infinitesimal strip deformation}
	
	\[
	\begin{array}{cc}
		f_{\al}:&\tei {\Pi} \longrightarrow T\tei{\Pi}\\
		&m\mapsto  \frac{dF_{t\al} (m)}{dt}|_{t=0}
	\end{array}
	\]
	where $F_{t\al} (m)$ is obtained from $m$ by strip deforming along $\al$ with a fixed waist $\pal$ and the width as $t\wal$. 

	The \emph{infinitesimal strip map} is defined as:
	\[
	\begin{array}{ccrcl}
		\mathbb{P}f& : &	\hac \Pi & \longrightarrow & \ptan {\Pi}\\
		& &\sum\limits_{i=1}^{\dim \tei \Pi} c_i \al_i&\mapsto&\bra{\sum\limits_{i=1}^{\dim \tei \Pi}c_i f_{\al_i}(m)} 
	\end{array}
	\]

\begin{figure}
	\centering
\includegraphics[width=\linewidth]{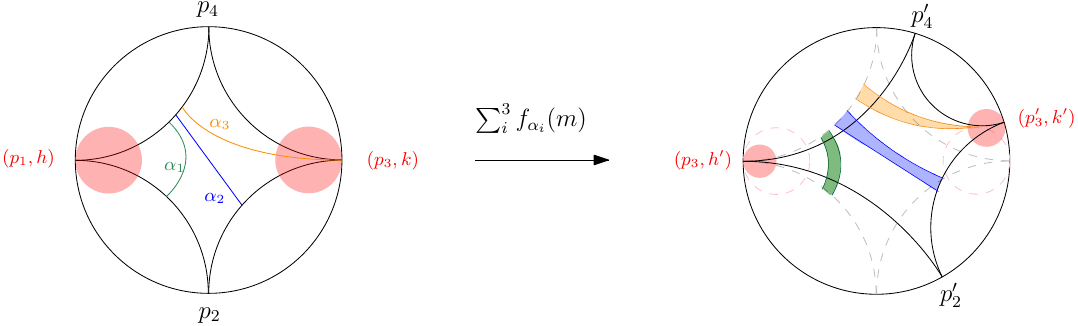}
\caption{The strip map}
\label{fig: stripmap}
\end{figure}
\vspace{0.5cm}
 See Fig. \ref{fig: stripmap} for an example.

In \cite{panda2023strip}, we proved the following parametrisation of the projectivised admissible cone.
for two types of decorated polygons — fully decorated polygons and fully decorated  once-punctured polygons. 

 \begin{thm}\label{thm: oldppdeco}
	Let $\Pi$ be a fully decorated $n$-gon $\dep n$ ($n\geq 3$)   (resp. a decorated once-punctured $n$-gon $\depu n$, for $n\geq 1$) with a decorated metric $m$  in the deformation space $\tei {\Pi}$. Fix a choice of strip template. Then the projectivised infinitesimal strip map $\mathbb{P}f: \hac \Pi\longrightarrow \ptan \Pi$, when restricted to the pruned arc complex $\sac {\Pi}$, is a homeomorphism onto its image $\mathbb{P}^+(\adm m)$ $\simeq \R^{2n-4}\, (\text{resp. } \R^{2n-2}$), where $\adm m$ is the admissible cone.
\end{thm}

We also proved a version of the above theorem for the undecorated ideal polygons and once-punctured ideal polygons.  In these cases the arcs are finite with endpoints on non-consecutive edges of the polygon. We show that the arc complex parametrises the entire positively projectivised deformation space in these cases.
\begin{thm}\label{thm: oldppundeco}
	Let $\Pi$ be an ideal polygon $\ip n$ ($n\geq4$) (resp. a once punctured polygon $\punc n$ ($n\geq2$)) with a metric $m\in \tei \Pi$. Fix a choice of strip template. Then, the projectivised infinitesimal strip map $\mathbb{P}f: \hac \Pi\longrightarrow \ptan \Pi$ $\simeq \s{n-4}\,( \text{resp. } \s{n-2})$ is a homeomorphism.
\end{thm}		

Combining these two results we get that
 
 \begin{thm}\label{cor: pardeco}
	Let $\Pi$ be a partially decorated $n$-gon $\pdep nr$ ($n\geq 3$)   (resp. a decorated once-punctured $n$-gon $\pdepu nr$ ($n\geq 1$) with a decorated metric $m\in \tei {\Pi}$. Then the projectivised infinitesimal strip map $\mathbb{P}f: \hac \Pi\longrightarrow \ptan \Pi$, when restricted to the pruned arc complex $\sac {\Pi}$, is a homeomorphism onto its image $\mathbb{P}^+(\adm m)$ $\simeq \R^{n+r-4}\, (\text{resp. } \R^{n+r-2}$), where $\adm m$ is the admissible cone.
 \end{thm}

We end this section by stating a theorem, proved in \cite{panda2023strip}, that will be useful in the next section. 
\begin{thm} \label{oldthm: basis}
	Let $\Pi\in\pdep nr$ or $\pdepu nr$ be a partially decorated hyperbolic polygon and $m\in\tei {\Pi}$ be a metric. Fix a choice of strip template. Let $\sigma$ be a top-dimensional simplex of its arc complex $\hac{\Pi}$ and let $\ed$ be the corresponding edge set. Then the set of infinitesimal strip deformations $B=\{\isd\mid e\in \ed\}$ forms a basis of the tangent space $\tang{\Pi}$.
\end{thm}

\subsection{Extension of the strip map}
In this section, we extend the strip map to the full arc complex and prove that it is a homeomorphism onto the closure of the (projectivised) admissible cone.

\begin{thm}
Let $\Pi$ be a partially decorated (possibly once-punctured) polygon with a metric $m\in \tei \Pi$. Then the projectivised strip map $\mathbb{P}f: \hac \Pi\longrightarrow \ptan \Pi$ is a homeomorphism onto $\mathbb{P}^+(\overline{\adm m})$.
\end{thm}
Before giving the proof, we illustrate the theorem with three examples. 
\begin{figure}[!h]
	\centering
	\includegraphics[width=12cm]{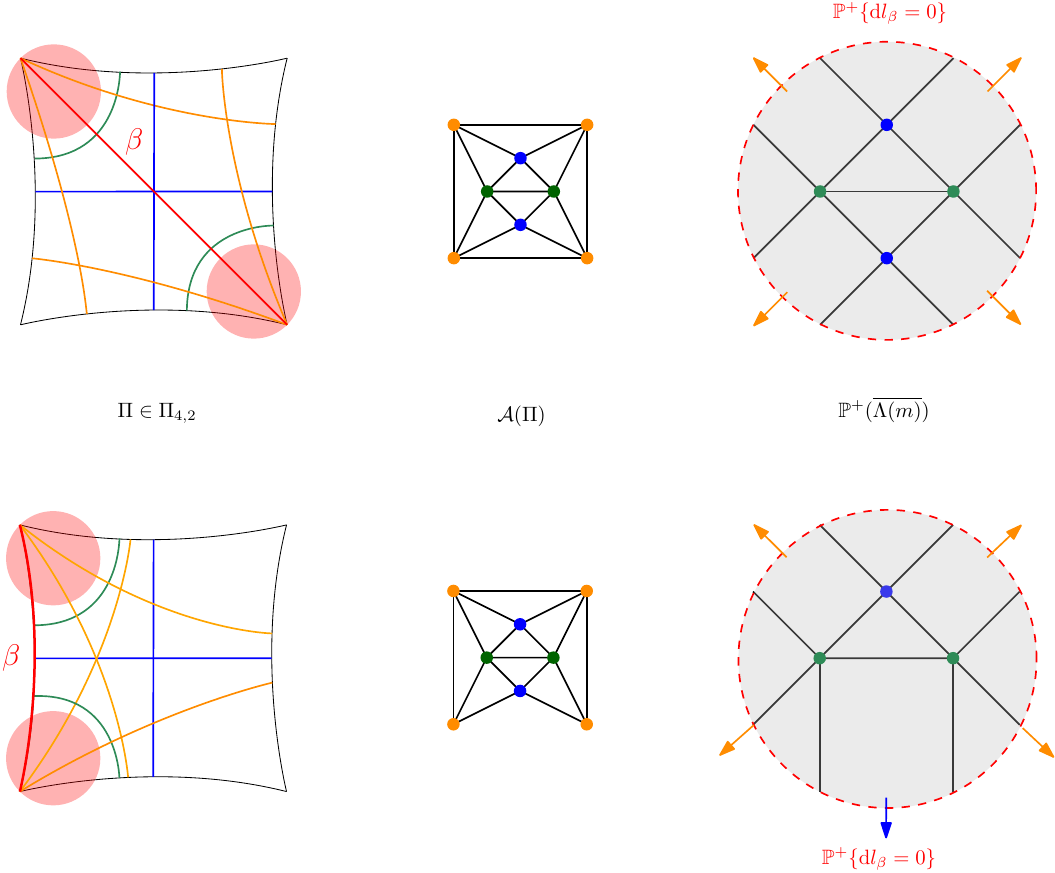}
	\caption{Arcs and horoball connections of $\depu 2$}
	\label{fig: partdeco}
\end{figure}
\begin{figure}[!h]
	\begin{subfigure}{\linewidth}
		\centering
		\includegraphics[width=8cm]{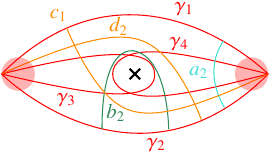}
		\caption{Arcs and horoball connections of $\depu 2$}
		\label{fig: arc2p}
	\end{subfigure}
	
	\begin{subfigure}[!h]{\linewidth}
		\centering
		\includegraphics[width=16cm]{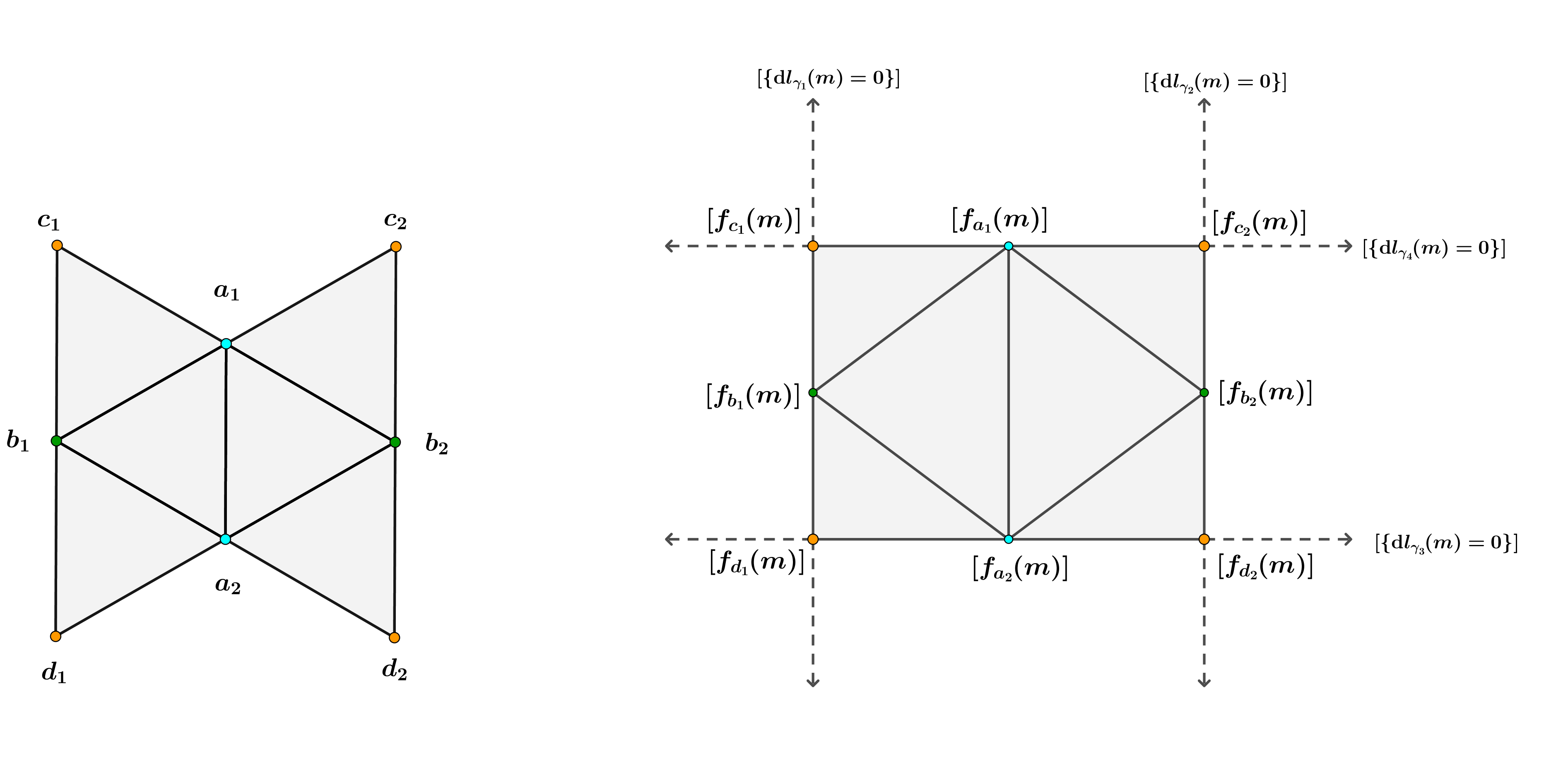}
		\caption{The projectivised strip map for $\depu 2$}
		\label{fig: strip2p}
	\end{subfigure}
\end{figure}
These are shown in Figures \ref{fig: partdeco}, \ref{fig: arc2p} and \ref{fig: strip2p}. 

Firstly we consider an ideal quadrilateral with two decorated spikes. There are two possible configurations, which are given by the two rows in Figure \ref{fig: partdeco}. In both the cases there is only one horoball connection, named $\be$, coloured in red. From Theorem \ref{main2}, we get that the arc complex of this polygon is PL-homeomorphic to a closed ball of dimension 2. The arc complexes for both the configurations are shown in the middle panel of Fig. \ref{fig: partdeco}. 

The images of the projectivised strip map are shown in the right panel. In this affine chart, the admissible cone covers the entire plane. Its boundary is the circle at infinity given by the positive projectivisation of the hyperplane $\{\mathrm d l_\be =0\}\subset \tang \Pi$, containing all the infinitesimal deformations that do not lengthen the horoball connecction $\be$ but gives motion to the undecorated spikes.

In the second example we have a once-punctured bigon with both its spikes decorated. There are four horoball connections denoted by $\ga_1, \ldots, \ga_4$. The arc complex is again a two dimensional closed ball. The boundary of the admissible cone is formed by the four hyperplanes, given by $\{ \mathrm d l_{\ga_i} =0\}\subset \tang \Pi \}$, for $i=1,2,3,4$. 

\begin{proof}
Firstly, we show that the image of $\mathbb{P}f$ is contained in $\mathbb{P}^+(\overline{\adm m)}$.
Recall that the set $\overline{\adm m}$ is the intersection of finitely many hyperspaces of the form $$\{ u\in \tang \Pi \mid \mathrm{d} l_\be (m)(u) \geq 0\},$$ where $\be$ is a horoball connection of $\Pi$.
From Theorem \ref{cor: pardeco}, we know that $\mathbb{P}f(\sac{\Pi})=\mathbb P^+(\adm m)$. 
Now let $x\in \hac \Pi \setminus \sac \Pi$. Then $x$ is contained in the interior of a unique boundary simplex $\sigma_x$. Then the arc family corresponding to $\sigma_x$ is disjoint from some horoball connection $\be_0$. 
We know from \cite{panda2023strip} that $$\mathrm{d} l_\be (m)(f(x))\left\{ \begin{array}{rl}
	>0, & \text{when $x$ intersects $\be$},\\
	=0, & \text{otherwise}.
	\end{array}
	\right. $$
So $\mathbb{P}f(x)\in \mathbb{P}^+ (\{\mathrm{d}l_{\be_0}=0\}\cap\partial\overline{\adm m})$.

It suffices to prove the homeomorphism in the deprojectivised case: 
\[ f: C(\hac \Pi) \longrightarrow \overline{\adm m},\]
where $C(\hac \Pi)$ is homeomorphic to $\R_{>0}\times\ball {N-1}$, using Theorems \ref{main2}, \ref{mainp}. Here $N$ is the dimension of $\tang \Pi$. The admissible cone $\overline{\adm m}$ is a also closed convex cone of $\tang \Pi \setminus \{0\}.$ In the following we prove that $f$ is a local homeomorphism and a proper map. This implies that $f$ is a covering map. Since both these manifolds $C(\hac\Pi)$ and $\overline{\adm m}$ are simply connected and have the same dimension, we can then conclude that $f$ is a homeomorphism.  

Firstly, we show that the deprojectivised strip map is proper. Let $\{x_n\}_n \in C(\hac\Pi)$ be a sequence such that $x_n\to\infty$. Since the arc complex $\hac\Pi$ is a finite simplicial complex, we can assume (up to extracting a subsequence) that $x_n\in C(\sigma)$, where $\sigma \in \hac\Pi$ is a simplex. Then we can write $x_n=\sum\limits_{i}c_i ^{n}\al_i$ where $\al_i\in \sigma^{(0)}$,  $c_i^{n}\geq 0$ and $w(x_n):=\sum\limits_{i} c_i^{n} >0$. Then, the hypothesis $x_n\to\infty$ implies that, up to a subsequence, either $w(x_n)\to 0 $ or $w(x_n)\to\infty$. We need to prove that $f(x_n)\to\infty$ in $\overline{\adm m}$. In other words, we need to show that the norm of $f(x_n)$ tends to either 0 or to~$\infty$. From Theorem \ref{oldthm: basis}, we know that the set of infinitesimal strip deformations $\{f_{\al_i}(m)\mid \al_i \in \sigma^{(0)}\}$ is linearly independent. So there exists $k,K>0$ such that $kw(x_n)\leq f(x_n)\leq Kw(x_n)$. This proves that $f(x_n)\to \infty$.

Now we prove that the deprojectivised strip map $f$ is a local homeomorphism. Given a point $x\in C(\hac \Pi)$, denote by $[x]$ its image in $\hac \Pi$. Suppose that $x$ is contained in the interior of $C(\sigma_{[x]})$, for a simplex $\sigma_{[x]}$ of $\hac \Pi$ containing $[x]$. It suffices to show that this map $f$ is a homeomorphism for all $d:=\codim{\sigma_{[x]}}\geq 0$. 
Suppose that $d=0$. Then from Theorem \ref{oldthm: basis}, we have that $\{f_\al(m)\mid \al \in \sigma_{[x]}^{(0)}\}$ is a basis of $\tang \Pi$. So $f$ is a linear homeomorphism in the interior of $C(\sigma_{[x]})$.
Next suppose that $d=1$. The case when $\sigma_{[x]}$ is a internal simplex of $\hac\Pi$ follows from Theorem \ref{thm: oldppdeco}. Suppose that $\sigma_{[x]}$ is a boundary simplex. Then it is contained in exactly one face of codimension 0, say $\Delta$, and $f(\sigma_{[x]})\in \partial\adm m$. Furthermore, from the codimension 0 case we know that $f(C(\Delta))$ is non-degenerate. Hence, $f(C(\sigma_{[x]}))$ is a codimension one boundary facet of $\overline{\adm m}$.

Now suppose that the deprojectivised strip map is a local homeomorphism around all  points $x\in C(\hac \Pi)$ such that $\codim {\sigma_{[x]}} =0, 1, \ldots, d-1$. We already know that $f$ is a local homeomorphism when $x$ is in the interior of $C(\hac\Pi)$. So we need to treat only the case when $x\in \partial\hac\Pi$ such that $\codim {\sigma_{[x]}} =d$. We can write $x=\sum\limits_{\al\in \sigma_{[x]}^{(0)}} c_{\al}\al$, where $c_\al >0$. Let $y$ be a point close to $x$ in $C(\hac\Pi)$. Let $y':=y-x=\sum\limits_{\al\in \sigma_{[x]}^{(0)}} c'_{\al}\al + \sum\limits_{\be\in \mathrm{Lk}^{(0)}}c'_{\be}\be$, where $c'_{\al}\geq 0$. Since $y=x+y'\in C(\hac\Pi)$, we have that $c'_\be\in \R$. Define $g(y'):=f(x+y')-f(x)$ and extend it to the full tangent cone of $C\hac\Pi$ at $x$ to get $$g:C\Link{\sigma_{[x]}}{\hac\Pi} \times \R^{N-d} \longrightarrow \mathrm{T}_{f(x)}\tang \Pi.$$ Now for $y':=\sum\limits_{\al\in \sigma_{[x]}^{(0)}} c'_{\al}\al \in C(\sigma_{[x]})$, $y'':= \sum\limits_{\al\in \sigma_{[x]}^{(0)}} c''_{\al}\al+ \sum\limits_{\be\in \mathrm{Lk}^{(0)}}c''_{\be}\be \in C\hac\Pi$, we have that 
\[
g(y')+g(y'')=\sum\limits_{\al\in \sigma_{[x]}^{(0)}} (c'_{\al}+c''_{\al})\al+ \sum\limits_{\be\in \mathrm{Lk}^{(0)}}c'_{\be}\be=g(y'+y'').
\]
Furthermore, we have that for $t>0$, $g(ty')=tg(y)$. So the local homeomorphism of $f$ at $x$ is equivalent to homeomorphism of the map given by \[    \overline{g}:\Link{\sigma_{[x]}}{\hac\Pi} \rightarrow \mathbb P^+(\tang \Pi /\Span{f(\sigma_{[x]})}),\]
From Theorems \ref{main2} and \ref{mainp}, we know that $\Link{\sigma_{[x]}}{\hac\Pi})\simeq \ball{d-1}$. So $\overline{g}$ becomes a map defined between two spaces both homeomorphic to $\ball{d-1}$. 
Let $y\in \Link{\sigma_{[x]}}{\hac\Pi}$, contained in the interior of the simplex $\sigma_y$ of the link. Then we have 
\[\Link{\sigma_{y}}{\Link{\sigma_{[x]}}{\hac\Pi}}= \Link{\sigma_{[x]}\cup \sigma_{y}}{\hac\Pi}=\left\{
\begin{array}{rl}
\ball{d'-1}, & \text{ if } \sigma_{[x]}\cup \sigma_{y} \in \partial \hac\Pi,\\
\s{d'-1}, & \text{ otherwise,}
\end{array}
\right.\]
where $d':=\codim {\sigma_{[x]}\cup \sigma_{y}}<d=\codim{\sigma_{[x]}}$. By induction hypothesis, $\overline{g}$ is a local homeomorphism around $y$. So $\overline{g}$ is a local homeomorphism between two simply connected spaces, hence a global homeomorphism. This concludes the proof of local homeomorphism of $f$.




\end{proof}

\newpage
\bibliography{colouredpolygons.bib}
\bibliographystyle{plain}
\end{document}